\newtheorem{theorem}{Theorem}[section]
\newtheorem{lemma}[theorem]{Lemma}
\DeclareMathOperator{\supp}{supp}
\numberwithin{equation}{section}
\begin{document}
\author{Nata Gogolashvili, K\'aroly Nagy, George Tephnadze}
\title[Strong convergence theorem]{Strong convergence theorem for  Walsh-Kaczmarz-Fej\'er means}
\thanks{Research supported by projects T\'AMOP-4.2.2.A-11/1/KONV-2012-0051, GINOP-2.2.1-15-2017-00055 and by Shota Rustaveli National Science Foundation grant no. FR-19-676.}

\address{N. Gogolashvili, The University of Georgia, School of Science and Technology, 77a Merab Kostava St, Tbilisi, 0128, Georgia}
\email{nata.gogolashvili@gmail.com}

\address{K. Nagy, Institute of Mathematics and Computer Sciences, University of Ny\'\i regyh\'aza, P.O. Box 166, Ny\'\i regyh\'aza, H-4400 Hungary }
\email{nagy.karoly@nye.hu}

\address{G. Tephnadze, The University of Georgia, School of Science and Technology, 77a Merab Kostava St, Tbilisi, 0128, Georgia}
\email{g.tephnadze@ug.edu.ge }
\date{}

\maketitle

\begin{abstract}
As main result we prove that Fejér means of Walsh-Kaczmarz-Fourier series are uniformly bounded operators from the Hardy martingale space $\ H_{p}$ to the Hardy martingale space $H_{p}$ for $ 0<p\leq 1/2.$
\end{abstract}

\noindent
\textbf{Key words and phrases:} Walsh-Kaczmarz system, Fej\'er means, maximal operator, strong convergence, martingale Hardy space.
\par\noindent
\textbf{2010 Mathematics Subject Classification.} 42C10.

\section{Introduction}
In 1948 $\breve{\text{S}}$neider \cite{Snei} introduced the Walsh-Kaczmarz
system and showed that the inequality
\begin{equation*}
\limsup_{n\to \infty }\frac{D_{n}^{\kappa }(x)}{\log n}\geq C>0
\end{equation*}
holds for the Dirichlet kernels $D_n^\kappa$ almost everywhere. In 1974 Schipp \cite{Sch2} and  Young \cite{Y} proved
that the Walsh-Kaczmarz system is a convergence system. Skvortsov
in 1981 \cite{Sk1} showed that the Fej\'er means with respect to
the Walsh-Kaczmarz system converge uniformly to $f$ for any
continuous functions $f$. G\'at \cite{gat} proved, for any
integrable functions, that the Fej\'er means with respect to the
Walsh-Kaczmarz system converge almost everywhere to the function. He showed that the maximal operator $\sigma^{\kappa,*}$  of Walsh-Kaczmarz-Fej\'er means is of weak type $(1,1)$ and of type $(p,p)$ for all $1<p\leq\infty$.
G\'at's result was generalized by Simon \cite{S2} (see also \cite{si2}), who showed that the maximal operator $\sigma^{\kappa,*}$ is of type $(H_p,L_p)$ for $p>1/2$.

In the endpoint case $p=1/2$  
Goginava \cite{Gog-PM} proved that  the maximal operator is not of type $(H_{1/2},L_{1/2})$. Moreover, in case $p=1/2$
Weisz \cite{We5} showed that the maximal operator is of weak type $(H_{1/2},L_{1/2})$. The investigation of the maximal operator ${\sigma }^{\kappa,*}$ was continued by Goginava and Nagy. In 2011 they proved that the maximal operator $\tilde{\sigma}^{\kappa,*}$ defined by
$$
\tilde{\sigma }^{\kappa,*}:=
\sup_{n\in \mathbf{P}}\frac{|{\sigma}_n^\kappa  f|}{\log^{2}(n+1)}
$$
is bounded from the Hardy space $H_{1/2}$ to the space
$L_{1/2}$ \cite{GN-CZMJ}. They also showed the sharpness of this result. Namely, they showed that for any nondecreasing function $\varphi\colon {\mathbb P}\to [1,\infty)$ satisfying
the condition 
\begin{equation}\label{cond}
\overline{\lim_{n\to\infty}}\frac{\log^2(n+1)}{\varphi (n)}=+\infty
\end{equation}
the modified maximal operator $\tilde{\sigma }^{\kappa,*}_\varphi:=\sup_{n\in \mathbb{P}}
\frac{|\sigma_n^\kappa  f|}{\varphi(n)}$ is not bounded from the Hardy space
$H_{1/2}$ to the space $L_{1/2}$.

The case $0<p<1/2$ was studied by Tephnadze \cite{T1}. Namely, he showed that the maximal operator defined by 
$$
\tilde{\sigma }_p^{\kappa,*}:=
\sup_{n\in \mathbb{P}}\frac{|{\sigma}_n^\kappa  f|}{(n+1)^{1/p-2}}
$$
is bounded from the Hardy space $H_p(G)$ to the space $L_p(G)$ ($0<p<1/2$). He also showed that the sequence $(n+1)^{1/p-2}$ is sharp.

In paper \cite{T2} (see also \cite{PTT,PTTW,T4}) Tephnadze found necessary and sufficient conditions for the
convergence of Walsh-Kaczmarz-Fej\'er means in the terms of  modulus of continuity on
the Hardy spaces $H_p$, when $0 < p <1/2$ and $p=1/2$, separately. We note that the proofs of these two results based on the properties of the modified maximal operators $\tilde{\sigma}^{\kappa,*}$ and $\tilde{\sigma}_p^{\kappa,*}$ (for details see \cite{GN-CZMJ,T1}).

Simon \cite{si3}  proved that there is an
absolute constant $c_{p},$ depending only on $p,$ such that
\begin{equation}\label{1cc}
\overset{\infty }{\underset{k=1}{\sum }}\frac{\left\Vert S^\kappa_{k}f\right\Vert
	_{p}^{p}}{k^{2-p}}\leq c_{p}\left\Vert f\right\Vert _{H_{p}}^{p},
\end{equation}
for all $f\in H_{p}\left( G\right) ,$ where $0<p<1.$

Similar problem for the Walsh-Kaczmarz system in the endpoint case $p=1$ is still open problem, but for the Walsh system analogue of this statement
was proven in the work of Simon \cite{si1}
 (see also \cite{b, b1, gat1, BNPT,tut1}) and for the trigonometric system it is proven by
Smith \cite{sm}.

In the present paper we prove for $0<p\leq 1/2$ and Walsh-Kaczmarz-Fej\'er means, that  there exists a positive constant $c_p$ depending only on $p$, such that 
 $$
 \frac{1}{\log^{[p+1/2]} n}\sum_{m=1}^n 
 \frac{\Vert \sigma_m^\kappa (f)\Vert_{H_p}^p}{m^{2-2p}}\leq 
 c_p \Vert f\Vert_{H_p}^p
 $$
 holds for all $f\in H_p$.
Moreover, we show the sharpness of our main theorem. That is, we state a strong convergence result in the endpoint case $p=1/2$ which was investigated in papers 
\cite{Gog-PM, GN-CZMJ,T2,We5} and we prove a strong convergence result also in the case $0<p<1/2$ with related papers \cite{T1,T2}.
We note that in 2014 analogical Theorems for Walsh-Paley system was reached by Tephnadze \cite{T3},  the two-dimensional case was investigated by Nagy and Tephnadze \cite{NG1, NG2, NG3}.

Now, we give a brief introduction to the theory of dyadic analysis \cite{AVDR,SWSP}.
Let $\mathbb{P}$ denote the set of positive integers, $\mathbb{N:=P}\cup \{ 0\}.$  Let $G$ be the Walsh group. The measure on $G$ is denoted by $\mu$. The elements of $G$ are of the form
$x=\left( x_{0},x_{1},\ldots,x_{k},\ldots\right) $ with coordinates $x_{k}\in
\{0,1\}\left( k\in \mathbb{N}\right) .$ The group operation on $G$
is the coordinate-wise addition modulo 2. A base for the neighborhoods of $G$ can be given in the following
way:
\begin{equation*}
I_{0}\left( x\right) :=G,\quad I_{n}\left( x\right)
:=\left\{ y\in
G:\,y=\left( x_{0},\ldots,x_{n-1},y_{n},y_{n+1},\ldots\right) \right\} ,
\end{equation*}
$\left( x\in G,n\in \mathbb{N}\right) .$
These sets are called dyadic intervals containing $x$. Let $0=\left( 0:i\in \mathbb{N}\right) \in G$ denote the null element of
$G$. Let us set $I_{n}:=I_{n}\left( 0\right) \,\left( n\in
\mathbb{N}\right) $ and 
 $e_{n}:=\left( 0,\ldots,0,1,0,\ldots\right) \in G,$ where the $n$th coordinate  is 1 and the rest are zeros
$\left( n\in \mathbb{N}\right) .$

The $k$th Rademacher function is given by 
\begin{equation*}
r_{k}\left( x\right) :=\left( -1\right) ^{x_{k}}\quad (k\in \mathbb{N},x\in G). 
\end{equation*}

The Walsh-Paley system is defined as the product system of Rademacher functions. Namely,  every natural number $n$ can be  expressed
in the number system of base 2, in the form 
 $n=\sum\limits_{i=0}^{\infty }n_{i}2^{i}$, where $n_{i}\in
\{0,1\}$ is called the $i$th coordinate of $n$ $\left( i\in \mathbb{N}\right) $. 

 Let us define the order $|n|$ of $n$ by 
$\left| n\right| :=\max\{j\in \mathbb{N:}n_{j}\neq 0\}$, that is
$2^{\left| n\right|}\leq n<2^{\left| n\right| +1}.$

The sequence of Walsh-Paley
functions is given by (for details see e.g. \cite{G-E-S, SWSP})
\begin{equation*}
w_{n}\left( x\right) :=\prod\limits_{k=0}^{\infty }\left(
r_{k}\left( x\right) \right) ^{n_{k}}=r_{\left| n\right| }\left(
x\right) \left( -1\right) ^{\sum\limits_{k=0}^{\left| n\right|
		-1}n_{k}x_{k}}=r_{\left| n\right| }\left( x\right)w_{n-2^{\left| n\right|}}\left( x\right)\quad\left( x\in G,n\in \mathbb{P}\right) .
\end{equation*}

The Walsh-Kaczmarz functions are defined by $\kappa_0=1$ and for $n\ge 1$
$$
\kappa_n(x):=r_{\vert n\vert }(x)\prod_{k=0}^{\vert n\vert -1}
(r_{\vert n\vert -1-k}(x))^{n_k}
=r_{\vert n\vert }(x)(-1)^{\sum_{k=0}^{\vert n\vert -1}
	n_kx_{\vert n\vert -1-k}}.
$$

V. A. Skvortsov (see \cite{Sk1}) gave a relation between the
Walsh-Kaczmarz functions and the Walsh-Paley functions by the help of a coordinate-transformation $\tau_A\colon G\to G$ given by
$$
\tau_A(x):=(x_{A-1},x_{A-2},...,x_1,x_0,x_A,x_{A+1},...)$$
for
$A\in
{\mathbb N}.$
By the definition of $\tau_A$, we have
$$
\kappa_n(x)=r_{\vert n\vert }(x)w_{n-2^{|n|}}(\tau_{\vert n\vert}(x)) \quad (n\in {\mathbb N},x\in G).
$$

The Dirichlet kernels and partial sums are defined by
$$
D_{n}^\alpha  := \sum_{k=0}^{n-1} \alpha_k, \quad  S_n^\alpha(f;x):=\sum_{k=0}^{n-1} 
\widehat{f}^\alpha(k)\alpha_k(x)
$$
for both system $\alpha_n =w_n$ $(n\in {\mathbb N})$ and $\alpha_n=\kappa_n$ $(n\in {\mathbb N}),$ separately, Let us set $D_0^\alpha :=0.$ The $2^n$-th Dirichlet kernels have a
closed form (for details see e.g. \cite{G-E-S, SWSP})

\begin{equation}\label{Dir}
D_{2^n}^w(x)=D_{2^n}^\kappa (x)=D_{2^n} (x)=\begin{cases} 0, & \textrm{if } x\not\in I_n\\
2^{n},& \textrm{if } x\in I_n.\end{cases}
\end{equation}

The $n$th Fej\'er mean and kernel of the
Walsh-(Kaczmarz)-Fourier series of a function $f$ is given by
\begin{equation*}
\sigma_{n}^{\alpha}(f;x)=\frac{1}{n}\sum\limits_{j=0}^{n}S_{j}^{\alpha}(f;x),
\quad 
K_{n}^{\alpha}\left( x\right) :=\frac{1}{n}\sum\limits_{k=0}^{n}D_{k}^{\alpha}\left(
x\right) .
\end{equation*}
It is known that (for details see e.g. \cite{G-E-S, SWSP}) there exists a positive constant $C$ such that 
\begin{equation}\label{norm-Fejer}
\Vert K_n^\kappa \Vert_1\leq C \quad\textrm{for all }n\in {\mathbb N}. 
\end{equation}

\section{Hardy spaces and auxiliary propositions}

To prove our main Theorem  we need the following Lemmas and definitions in \cite{GGN-SSMH,Sk1,We1,we3}.
\begin{lemma}[Skvortsov \cite{Sk1}]\label{lemma-Sk}
	For
$n\in {\mathbb P}, x\in G$
	\begin{eqnarray*}
		nK_n^\kappa(x)&=&1+
		\sum_{i=0}^{|n|-1}2^iD_{2^i}(x)
		+\sum_{i=0}^{|n|-1}2^ir_i(x)K_{2^i}^w(\tau_i(x))\\&+& (n-2^{|n|})(D_{2^{|n|}}(x)
		+r_{|n|}(x)K_{n-2^{|n|}}^w(\tau_{|n|}(x))).
	\end{eqnarray*}
\end{lemma}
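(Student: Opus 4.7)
My plan is to expand $nK_n^\kappa(x)=\sum_{k=0}^n D_k^\kappa(x)$ (noting that $D_0^\kappa=0$ and $D_1^\kappa=\kappa_0=1$) by partitioning the index range along dyadic blocks and applying a Dirichlet-kernel identity that converts each $D_k^\kappa$ into a Walsh-Paley Dirichlet kernel composed with the coordinate permutation $\tau_{|k|}$.

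The main auxiliary identity I would establish first is
\begin{equation*}
D_k^\kappa(x) = D_{2^i}(x) + r_i(x)\,D_{k-2^i}^w(\tau_i(x)) \qquad (\,2^i \le k \le 2^{i+1},\ i\in\mathbb N\,).
\end{equation*}
This is proven by splitting $\sum_{l=0}^{k-1}\kappa_l(x)$ at the index $2^i$: the first $2^i$ terms sum to $D_{2^i}^\kappa = D_{2^i}$ because the Kaczmarz rearrangement merely permutes the first $2^i$ Walsh-Paley functions, while for $l\in\{2^i,\ldots,k-1\}$ the Skvortsov relation $\kappa_l(x)=r_i(x)w_{l-2^i}(\tau_i(x))$ applies (since $|l|=i$), and summing yields $r_i(x)D_{k-2^i}^w(\tau_i(x))$. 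The right endpoint $k=2^{i+1}$ is consistent because $\tau_i$ permutes the first $i$ coordinates (so $I_i$ is $\tau_i$-invariant, hence $D_{2^i}(\tau_i x)=D_{2^i}(x)$) and $D_{2^{i+1}}=D_{2^i}(1+r_i)$. With this identity in hand I partition $\{1,2,\ldots,n\}$ as
\begin{equation*}
\{1\}\ \cup\ \bigcup_{i=0}^{|n|-1}\{2^i+1,\ldots,2^{i+1}\}\ \cup\ \{2^{|n|}+1,\ldots,n\},
\end{equation*}
choosing the blocks so that a single parameter $i$ drives the auxiliary identity uniformly on each piece.

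The singleton $\{1\}$ contributes the leading constant $D_1^\kappa=1$. Summation on a complete block of length $2^i$ gives
\begin{equation*}
\sum_{k=2^i+1}^{2^{i+1}}D_k^\kappa(x) = 2^iD_{2^i}(x) + r_i(x)\sum_{l=1}^{2^i}D_l^w(\tau_i(x)),
\end{equation*}
and the elementary Walsh-Paley identity $\sum_{l=0}^m D_l^w = mK_m^w$ together with $D_0^w=0$ collapses the inner sum to $2^iK_{2^i}^w(\tau_i(x))$, producing precisely the $i$th term of the lemma. The trailing incomplete block of length $n-2^{|n|}$ is handled identically with $i=|n|$, yielding the final summand $(n-2^{|n|})(D_{2^{|n|}}(x)+r_{|n|}(x)K_{n-2^{|n|}}^w(\tau_{|n|}(x)))$. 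The only real subtlety is aligning block boundaries with the $K$-convention $nK_n=\sum_{k=0}^n D_k$ used in the paper: starting each block at $2^i+1$ rather than $2^i$ is what makes the inner telescoping produce $K_{2^i}^w$ with no boundary correction. No analytic estimates are involved; the proof reduces to this reindexing plus the Skvortsov permutation relation.
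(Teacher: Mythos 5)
Your proof is correct: the block decomposition $\{1\}\cup\bigcup_{i=0}^{|n|-1}\{2^i+1,\ldots,2^{i+1}\}\cup\{2^{|n|}+1,\ldots,n\}$, the identity $D_k^\kappa=D_{2^i}+r_i\,D_{k-2^i}^w\circ\tau_i$ obtained from $\kappa_l=r_{|l|}\,w_{l-2^{|l|}}\circ\tau_{|l|}$, and the telescoping $\sum_{l=0}^{m}D_l^w=mK_m^w$ together reproduce the stated formula exactly under the paper's convention $nK_n^\alpha=\sum_{k=0}^n D_k^\alpha$. The paper gives no proof of this lemma (it is quoted from Skvortsov's 1981 article), and your argument is precisely the standard derivation underlying that citation, so there is nothing to add.
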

\begin{lemma}[G\'at \cite{gat}]\label{lemma-Gat}
	Let $A,t\in {\mathbb N}, A>t.$ Suppose that 
	$x\in I_{t}\backslash I_{t+1}.$ Then 
	$$
	K_{2^A}^\omega (x)= \begin{cases} 0, & \textrm{if } x-x_te_t\not\in I_A\\ 
	2^{t-1},& \textrm{if } x-x_te_t\in I_A.\end{cases} 
	$$
	If $x\in I_A$, then $K_{2^A}^\omega (x)=\frac{2^A+1}{2}.$
\end{lemma}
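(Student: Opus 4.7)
The plan is to obtain the formula by a direct computation, working from the definition $K_{2^A}^\omega(x)=\frac{1}{2^A}\sum_{k=0}^{2^A} D_k(x)$ (recall $D_0=0$). Using $D_k(x)=\sum_{j=0}^{k-1}w_j(x)$ and interchanging the order of summation, I would first rewrite
\begin{equation*}
2^A K_{2^A}^\omega(x)=\sum_{j=0}^{2^A-1}(2^A-j)w_j(x)=2^A D_{2^A}(x)-\sum_{j=0}^{2^A-1}jw_j(x).
\end{equation*}
This reduces everything to analysing the auxiliary sum $S(x):=\sum_{j=0}^{2^A-1}jw_j(x)$.

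To evaluate $S(x)$, I would expand $j=\sum_{i=0}^{A-1}j_i2^i$ with $j_i\in\{0,1\}$ and $w_j(x)=\prod_k r_k(x)^{j_k}$, then interchange the $j$-sum with the $i$-sum, factor the product over the independent binary digits $j_k$, and use $\sum_{j_k\in\{0,1\}}r_k(x)^{j_k}=1+r_k(x)$ together with $\sum_{j_i\in\{0,1\}}j_i r_i(x)^{j_i}=r_i(x)$. This gives the clean identity
\begin{equation*}
S(x)=\sum_{i=0}^{A-1}2^i r_i(x)\prod_{\substack{0\le k<A\\ k\ne i}}\bigl(1+r_k(x)\bigr).
\end{equation*}
Since $1+r_k(x)$ equals $2$ when $x_k=0$ and $0$ when $x_k=1$, most terms die; which ones survive depends entirely on which coordinate of $x$ is the first nonzero one.

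Now I would split into the two cases. If $x\in I_A$ then $x_0=\cdots=x_{A-1}=0$, so $w_j(x)=1$ for every $j<2^A$ and the formula collapses to the arithmetic progression $2^A K_{2^A}^\omega(x)=\sum_{k=1}^{2^A}k=2^{A-1}(2^A+1)$, yielding $K_{2^A}^\omega(x)=(2^A+1)/2$. If $x\in I_t\setminus I_{t+1}$ with $t<A$, then $D_{2^A}(x)=0$ by \eqref{Dir}, and in the product defining $S(x)$ the factor $(1+r_t(x))=0$ kills every summand except the one with $i=t$. In that surviving term $r_t(x)=-1$, the subproduct over $k<t$ contributes $2^t$ (since $x_k=0$ there), and the subproduct over $t<k<A$ contributes $2^{A-t-1}$ if $x_k=0$ for every $t<k<A$ and $0$ otherwise. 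The condition $x_k=0$ for all $t<k<A$, together with the standing $x_0=\cdots=x_{t-1}=0$, is precisely $x-x_te_t\in I_A$.

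Putting the pieces together, when $x-x_te_t\in I_A$ we obtain $S(x)=-2^{A+t-1}$, hence $2^AK_{2^A}^\omega(x)=2^{A+t-1}$ and so $K_{2^A}^\omega(x)=2^{t-1}$; when $x-x_te_t\notin I_A$ we get $S(x)=0$ and so $K_{2^A}^\omega(x)=0$, matching the claim. The only technically delicate point is the bookkeeping in the last paragraph: translating the vanishing/non-vanishing of $\prod_{t<k<A}(1+r_k(x))$ into the single geometric condition $x-x_te_t\in I_A$. Once that equivalence is made explicit, everything else is just arithmetic of binary digits.
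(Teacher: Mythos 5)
Your argument is correct, and I checked the two delicate points: the identity $S(x)=\sum_{i=0}^{A-1}2^i r_i(x)\prod_{k\ne i}(1+r_k(x))$ follows exactly as you say by factoring over the independent digits $j_k$, and for $x\in I_t\setminus I_{t+1}$ the surviving $i=t$ term gives $S(x)=-2^t\cdot 2^t\cdot 2^{A-t-1}=-2^{A+t-1}$ precisely when $x_k=0$ for all $t<k<A$, which (since $x_k=0$ for $k<t$ is automatic and the group operation makes $x-x_te_t=x+e_t$) is exactly the condition $x-x_te_t\in I_A$; together with $D_{2^A}(x)=0$ off $I_A$ this yields $K_{2^A}^\omega(x)=2^{t-1}$ or $0$ as claimed, and the $x\in I_A$ case is the arithmetic series. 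Note that the paper itself gives no proof of this statement --- it is quoted as a known result of G\'at \cite{gat} --- so there is no in-paper argument to compare against; your direct digit-by-digit computation is a legitimate self-contained verification and is essentially the standard way this closed form is derived.
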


\begin{lemma}[G\'at, Goginava, Nagy \cite{GGN-SSMH}] \label{lemma-GGN-SSMH}
	Let $n<2^{A+1},A>N$ and $x\in I_{N}(
	x_{0},...,x_{m-1},x_{m}=1,$ $0,...,0,x_{l}=1,0,...,0)=:J_N^{m,l}
	,\,l=0,...,N-1,\,m=-1,0,...,l.$ Then
	\begin{equation*}
	\int\limits_{I_{N}}n\left| K_{n}^{w}\left( \tau _{A}\left( x+t\right)
	\right) \right| dt\leq c\frac{2^{A}}{2^{m+l}},
	\end{equation*}
	where
	\begin{eqnarray*}
		I_{N}\left( x_{0},...,x_{m}=1,0,...,0,x_{l}=1,0,...,0\right) :=I_{N}\left( 0,...,0,x_{l}=1,0,...,0\right) 
	\end{eqnarray*} for $m=-1$.
\end{lemma}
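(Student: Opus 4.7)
My plan is to decompose $nK_n^w$ into dyadic building blocks via a standard recursion, analyze each block pointwise using Lemma 2.2, and then integrate over $t\in I_N$, exploiting the forced coordinates of $x$ to bound the measure of the set where each block contributes.

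First I would rewrite $y:=\tau_A(x+t)$ in coordinates. Since $t\in I_N$ has $t_0=\cdots=t_{N-1}=0$ and $A>N$, one has
\[
y=(t_{A-1},t_{A-2},\dots,t_N,\,x_{N-1},\dots,x_0,\,t_A,t_{A+1},\dots),
\]
so $y$ carries \emph{forced} $1$'s at heights $A-1-l$ (always) and $A-1-m$ (when $m\ge 0$) inside its ``$x$-window'' $[A-N,A-1]$, while the $A-N$ low coordinates $y_0,\dots,y_{A-1-N}$ equal the free parameters $t_{A-1},\dots,t_N$, and $y_j=t_j$ for $j\ge A$. Next, I would iterate the standard recursion
\[
nK_n^w = 2^{n_1}K_{2^{n_1}}^w + (n-2^{n_1})D_{2^{n_1}} + w_{2^{n_1}}(n-2^{n_1})K_{n-2^{n_1}}^w,
\]
valid for $2^{n_1}\le n<2^{n_1+1}$ with $n_1\le A$, to write $nK_n^w(y)$ as $O(A)$ terms of the form $(\pm\text{Walsh})\cdot 2^iK_{2^i}^w(y)$ or $(\pm\text{Walsh})\cdot c\,D_{2^i}(y)$ with $|c|\le 2^i$ and $i\le A$. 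It therefore suffices to bound $\int_{I_N}2^i|K_{2^i}^w(y)|\,dt$ and $\int_{I_N}2^i|D_{2^i}(y)|\,dt$ for each $i$ and sum.

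The heart of the proof is to apply Lemma 2.2 to each $K_{2^i}^w(y)$: it vanishes unless $y\in I_i$ or $y\in I_{s^\ast}\setminus I_{s^\ast+1}$ with $s^\ast<i$ and $y_{s^\ast+1}=\cdots=y_{i-1}=0$ (and $D_{2^i}$ is supported in $I_i$). The first nonzero coordinate $s^\ast$ of $y$ is always $\le A-1-l$, and any forced $1$ caught inside the leading-zero block $(s^\ast,i)$ annihilates the kernel. Translating to $t$-conditions: for each admissible $i$, the $t$-set lies in $I_N$ with a specified block of free low coordinates forced to zero, and once $i$ is large enough to ``see'' the second forced $1$ at $A-1-m$ the kernel cuts the available measure by an additional factor $\sim 2^{m-l}$. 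Carrying out this measure count and summing over $i$ --- the sum telescoping geometrically because the supports of the $K_{2^i}^w$'s nest with increasing $i$ --- yields each integral $\le c\cdot 2^A/2^{m+l}$. The degenerate case $m=-1$ carries only the single forced $1$ at $A-1-l$, and the ``missing'' factor $2^{-m}=2$ is absorbed harmlessly via $\mu(I_N)=2^{-N}\le 2\cdot 2^{-l}$.

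The main obstacle is the combinatorial bookkeeping: for every $i\in\{0,\dots,A\}$ one must identify precisely which $t\in I_N$ makes the corresponding dyadic piece nonzero, express the measure of that set in terms of $m,l$, and confirm that summing the $O(A)$ contributions across $i$ incurs no logarithmic loss. The two distinct factors $2^{-m}$ and $2^{-l}$ come, respectively, from the forced $1$'s at heights $A-1-m$ and $A-1-l$ of $y$; it is their simultaneous interaction with the nested supports of the $K_{2^i}^w$'s that makes the accounting delicate, but also what allows the dyadic contributions to telescope into the single geometric bound $c\cdot 2^A/2^{m+l}$.
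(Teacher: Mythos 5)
The paper does not prove this lemma itself --- it is quoted verbatim from G\'at, Goginava and Nagy \cite{GGN-SSMH} --- and your plan follows exactly the route of that source: pass to $y=\tau_A(x+t)$ and locate the forced $1$'s at heights $A-1-l$ and $A-1-m$, expand $nK_n^w$ by the standard recursion into $2^iK_{2^i}^w$ and $D_{2^i}$ blocks, apply Lemma \ref{lemma-Gat} to each block, and count the measure of the admissible $t$-sets, with the factor $2^{-l}$ coming from the value $2^{s-1}$ at the forced first nonzero coordinate $s=A-1-l$ and the factor $2^{-m}$ from the kernel being annihilated once $i\geq A-m$. This is the correct approach and the mechanism is identified accurately; the only thing left unexecuted is the case-by-case measure count itself, which is routine given your setup.
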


The $\sigma $-algebra generated by the dyadic intervals of measure $2^{-k}$ will be denoted by ${\mathcal F}_{k}$ $\left( k\in \mathbb P\right) .$ Denote by $%
f=\left( f^{\left( n\right) },n\in \mathbb P\right) $ a martingale with
respect to $\left( {\mathcal F}_{n},n\in \mathbb P\right) $ (for details see, e. g.
\cite{we3}). The maximal function of a martingale $f$ is defined by
\begin{equation*}
f^{*}=\sup\limits_{n\in \mathbb P}\left| f^{\left( n\right) }\right| .
\end{equation*}

In case $f\in L_{1}\left( G\right) $, the maximal function can also be given
by
\begin{equation*}
f^{*}\left( x\right) =\sup\limits_{n\in \mathbb P}\frac{1}{\mu \left(
	I_{n}(x)\right) }\left| \int\limits_{I_{n}(x)}f\left( u\right) d\mu \left(
u\right) \right| ,\ \ x\in G.
\end{equation*}

For $0<p<\infty $ the Hardy martingale space $H_{p}(G)$ consists of all
martingales for which

\begin{equation*}
\left\| f\right\| _{H_{p}}:=\left\| f^{*}\right\| _{p}<\infty .
\end{equation*}

If $f\in L_{1}\left( G\right) $, then it is easy to show that the sequence $%
\left( S_{2^{n}}f:n\in \mathbb P\right) $ is a martingale. If $f$ is a
martingale, that is $f=(f^{\left( 0\right) },f^{\left( 1\right) },...)$ then
the Walsh-(Kaczmarz)-Fourier coefficients must be defined in a little bit
different way:
\begin{equation*}
\widehat{f}\left( i\right) =\lim\limits_{k\rightarrow \infty
}\int\limits_{G}f^{\left( k\right) }\left( x\right) \alpha _{i}\left(
x\right) d\mu \left( x\right) ,\ \ (\alpha _{i}=w_{i}\text{ or }\kappa _{i}).
\end{equation*}

The Walsh-(Kaczmarz)-Fourier coefficients of $f\in L_{1}\left( G\right) $
are the same as the ones of the martingale $\left( S_{2^{n}}f:n\in \mathbb P\right) $ obtained from $f$.

A useful characterization of the Hardy spaces $H_p$ is the atomic structure. 
A bounded measurable function $a$ is a $p$-atom, if there exists a dyadic interval $I$, such that
\begin{itemize}
	\item[a)] $\int_{I}a d\mu =0$,
	\item[b)] $\Vert a\Vert_\infty \leq \mu(I)^{-1/p}$,
	\item[c)] $\supp a\subset I$.
\end{itemize}

Hardy martingale spaces $H_{p}\left( G\right) $
for $0<p\leq 1$ have atomic characterizations (see e.g. Weisz \cite{We1, we3}):

\begin{lemma}[Weisz \cite{We1}] 	\label{lemma-W2} A martingale $f=\left( f^{\left( n\right) }:n\in \mathbb{N}%
	\right) $ is in $H_{p}\left( 0<p\leq 1\right) $ if and only if there exist a
	sequence $\left( a_{k},k\in \mathbb{N}\right) $ of p-atoms and a sequence $%
	\left( \mu _{k}:k\in \mathbb{N}\right) $ of real numbers such that, for
	every $n\in \mathbb{N},$%
	\begin{equation}
	\qquad \sum_{k=0}^{\infty }\mu _{k}S_{2^{n}}(a_{k})=f^{\left( n\right) },\text{
		\quad a.e.,}  \label{condmart}
	\end{equation}%
	where
	\begin{equation*}
	\qquad \sum_{k=0}^{\infty }\left\vert \mu _{k}\right\vert ^{p}<\infty .
	\end{equation*}%
	Moreover,
	\begin{equation*}
	\left\Vert f\right\Vert _{H_{p}}\backsim \inf \left( \sum_{k=0}^{\infty
	}\left\vert \mu _{k}\right\vert ^{p}\right) ^{1/p},
	\end{equation*}%
	where the infimum is taken over all decomposition of $f=\left( f^{\left(
		n\right) }:n\in \mathbb{N}\right) $ of the form (\ref{condmart}).
\end{lemma}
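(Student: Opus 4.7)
The plan is an atomic decomposition argument, handling the Walsh-Kaczmarz kernel via Skvortsov's formula (Lemma~\ref{lemma-Sk}) together with the G\'at and G\'at-Goginava-Nagy kernel estimates (Lemmas~\ref{lemma-Gat} and \ref{lemma-GGN-SSMH}).

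\emph{Step 1: Reduction to atoms.} Using Lemma~\ref{lemma-W2}, decompose $f=\sum_k \mu_k a_k$ with $\sum_k|\mu_k|^p\leq c_p\|f\|_{H_p}^p$. Since $\|\cdot\|_{H_p}^p$ is $p$-subadditive for $0<p\leq 1$, linearity of $\sigma_m^\kappa$ reduces the statement to the uniform estimate
\[
\frac{1}{\log^{[p+1/2]}n}\sum_{m=1}^n \frac{\|\sigma_m^\kappa(a)\|_{H_p}^p}{m^{2-2p}}\leq c_p
\]
over all $p$-atoms $a$ supported on some $I_N:=I_N(0)$ (the case of a general dyadic interval is analogous). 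For every $j<2^N$ the function $\kappa_j$ depends only on $x_0,\dots,x_{N-1}$---indeed $r_{|j|}$ involves only $x_{|j|}$ with $|j|<N$, and $w_{j-2^{|j|}}\circ\tau_{|j|}$ is determined by $x_0,\dots,x_{|j|-1}$---hence is constant on $I_N$. Combined with $\int a\,d\mu=0$, this gives $\widehat a^\kappa(j)=0$ for $j<2^N$, so $S_j^\kappa(a)\equiv 0$ for $j\leq 2^N$ and consequently $\sigma_m^\kappa(a)\equiv 0$ for every $m\leq 2^N$. The inner sum is therefore effectively taken over $m>2^N$.

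\emph{Step 2: The atomic estimate.} For $m>2^N$, write
\[
\sigma_m^\kappa(a)(x)=\int_{I_N}a(y)K_m^\kappa(x+y)\,d\mu(y)
\]
and expand $K_m^\kappa$ via Lemma~\ref{lemma-Sk}. The Dirichlet pieces $2^iD_{2^i}$ are handled trivially using (\ref{Dir}); the terms $2^ir_i(x+y)K_{2^i}^w(\tau_i(x+y))$ and $(m-2^{|m|})r_{|m|}(x+y)K_{m-2^{|m|}}^w(\tau_{|m|}(x+y))$ are integrated against $a$ over $I_N$ using Lemmas~\ref{lemma-Gat} and \ref{lemma-GGN-SSMH} to control the Fej\'er kernels composed with the coordinate reversal. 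Combined with $\|a\|_\infty\leq 2^{N/p}$, this produces, for $x\in J_N^{m_1,l}$ in the notation of Lemma~\ref{lemma-GGN-SSMH}, a pointwise bound of the form
\[
|\sigma_m^\kappa(a)(x)|\leq c\,\frac{2^{N/p+N}}{m}\cdot\frac{2^{|m|}}{2^{m_1+l}}.
\]
Dominating the dyadic maximal function of $\sigma_m^\kappa(a)$ by its values on these cosets, raising to the $p$-th power and summing the resulting series over the indices $m_1,l$ yields the core atomic estimate
\[
\|\sigma_m^\kappa(a)\|_{H_p}^p \leq c_p\,\frac{2^{N(2-2p)}}{m^{2-2p}}\,\Phi_p(m),
\]
where $\Phi_p(m)\equiv 1$ for $0<p<1/2$, while $\Phi_p(m)\leq c\log m$ at the endpoint $p=1/2$; the logarithm arises precisely because at $p=1/2$ the geometric summation over $m_1+l$ degenerates to a harmonic one.

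\emph{Step 3: Summation.} Substitute the atomic estimate and exchange order of summation:
\[
\sum_{2^N<m\leq n}\frac{\|\sigma_m^\kappa(a)\|_{H_p}^p}{m^{2-2p}}\leq c_p\,2^{N(2-2p)}\sum_{2^N<m\leq n}\frac{\Phi_p(m)}{m^{2(2-2p)}}.
\]
For $0<p<1/2$, the tail $\sum_{m>2^N}m^{-2(2-2p)}\leq c_p\,2^{-N(3-4p)}$ makes the full expression bounded by $c_p\,2^{-N(1-2p)}\leq c_p$. For $p=1/2$, $\sum_{2^N<m\leq n}m^{-2}\log m\leq c\,2^{-N}\log n$, so the expression is bounded by $c\log n$, exactly absorbed by the prefactor $(\log n)^{-1}$. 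The hardest part of the whole argument is the pointwise bound in Step~2: the coordinate reversal $\tau_i$ inside Skvortsov's expansion destroys the translation structure available for Walsh-Paley convolutions, so the only route to integrate $K_{2^i}^w\circ\tau_i$ over $I_N$ is the coset-by-coset analysis encoded in Lemma~\ref{lemma-GGN-SSMH}. Identifying precisely when that analysis loses a logarithm (and it does so only at $p=1/2$) is what pins down the factor $\log^{[p+1/2]}n$ in the statement.
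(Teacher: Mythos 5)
Your proposal does not prove the statement it was asked to prove. The statement is Weisz's atomic characterization of the martingale Hardy spaces $H_p$ for $0<p\leq 1$: the equivalence between $f\in H_p$ and the existence of a decomposition $f^{(n)}=\sum_k\mu_k S_{2^n}(a_k)$ into $p$-atoms with $\sum_k|\mu_k|^p<\infty$, together with the norm equivalence $\|f\|_{H_p}\backsim\inf(\sum_k|\mu_k|^p)^{1/p}$. This is a structural result about martingale Hardy spaces (in the paper it is a quoted lemma from Weisz's monograph, used as a black box). A proof would have to construct the atoms --- typically via stopping times on the level sets $\{f^*>2^k\}$, writing $f=\sum_k(f^{\nu_{k+1}}-f^{\nu_k})$ and normalizing each difference into a $p$-atom --- and separately verify the easy converse direction that an atomic sum lies in $H_p$ with controlled norm. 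None of this appears in your write-up.

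What you have written instead is an outline of the proof of the paper's main result (Theorem~\ref{thm-main1}), the strong convergence estimate $\frac{1}{\log^{[p+1/2]}n}\sum_{m=1}^n m^{2p-2}\|\sigma_m^\kappa(f)\|_{H_p}^p\leq c_p\|f\|_{H_p}^p$. Worse, your Step~1 explicitly invokes Lemma~\ref{lemma-W2} --- the very statement under consideration --- to reduce to atoms, so as a proof of that lemma the argument is circular, and the remaining steps (Skvortsov's kernel decomposition, the G\'at and G\'at--Goginava--Nagy estimates, the coset-by-coset integration) are entirely about Fej\'er kernels of the Walsh--Kaczmarz system and have no bearing on the atomic structure of $H_p$. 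The gap is therefore not a missing step but a wholesale mismatch of target: you need to go back to the stopping-time construction of the atomic decomposition (and the converse estimate showing that $\|\sum_k\mu_k a_k\|_{H_p}^p\leq c_p\sum_k|\mu_k|^p$), not to the summability theorem that this lemma is later used to establish.
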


\begin{lemma}[Weisz \cite{we3}]\label{lemma-weisz}
Suppose that the operator $T$ is $\sigma$-sublinear and for some $0<p\leq 1$
\begin{equation*}
\int\limits_{\overset{-}{I}}\left\vert Ta\right\vert ^{p}d\mu \leq c_{p}<\infty
\end{equation*}%
for every $p$-atom  $a$, where $I$ denotes the support of the atom. 
If $T$ is bounded from $L_\infty$ to $L_\infty $, then
	$$
	\Vert Tf\Vert_p\leq c_p \Vert f\Vert_{H_p} \quad
	\textrm{ for all } f\in H_p.
	$$
\end{lemma}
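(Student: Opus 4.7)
The plan is to invoke the atomic decomposition guaranteed by Lemma \ref{lemma-W2}: given $f\in H_p$ with $0<p\leq 1$, write $f^{(n)}=\sum_{k=0}^{\infty}\mu_k S_{2^n}a_k$ where each $a_k$ is a $p$-atom supported on a dyadic interval $I_k$, and the coefficients can be chosen so that $\sum_k|\mu_k|^p\leq c_p\|f\|_{H_p}^p$. Applying the $\sigma$-sublinearity of $T$ to this decomposition will yield the pointwise bound $|Tf|\leq \sum_{k=0}^\infty|\mu_k|\,|Ta_k|$.

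Because $0<p\leq 1$, the elementary inequality $(x+y)^p\leq x^p+y^p$ extends to countable sums of nonnegative terms, so integrating gives
\begin{equation*}
\|Tf\|_p^p=\int_G|Tf|^p\,d\mu\leq \sum_{k=0}^\infty|\mu_k|^p\int_G|Ta_k|^p\,d\mu.
\end{equation*}
Hence it is enough to prove a uniform bound $\int_G|Ta|^p\,d\mu\leq C_p$ for every $p$-atom $a$ supported on a dyadic interval $I$.

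To obtain the uniform bound I would split $G=I\cup\overline{I}$. The estimate over $\overline{I}$ is exactly the hypothesis, so $\int_{\overline I}|Ta|^p\,d\mu\leq c_p$. On the support $I$ I would invoke the $L_\infty\to L_\infty$ boundedness of $T$ together with property (b) of an atom: $\|Ta\|_\infty\leq C\|a\|_\infty\leq C\mu(I)^{-1/p}$, whence
\begin{equation*}
\int_I|Ta|^p\,d\mu\leq \mu(I)\cdot C^p\mu(I)^{-1}=C^p.
\end{equation*}
Adding the two contributions gives $\int_G|Ta|^p\,d\mu\leq c_p+C^p$. Substituting back into the previous display and using $\sum_k|\mu_k|^p\leq c_p\|f\|_{H_p}^p$ yields $\|Tf\|_p\leq c_p\|f\|_{H_p}$, as claimed.

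The only subtle point is justifying the pointwise $\sigma$-sublinear estimate in the first step: for $p<1$ the martingale $f$ need not be generated by any $L_1$-function, so $Tf$ has to be interpreted via the partial sums $f^{(n)}$. One verifies that for fixed $n$ the series $\sum_k\mu_k S_{2^n}a_k$ is well-behaved (and in fact reduces to a finite sum on each dyadic neighborhood), applies $\sigma$-sublinearity there, and passes to the limit in $n$. Apart from this bookkeeping the proof is the standard atomic-decomposition reduction and presents no further analytic obstacle.
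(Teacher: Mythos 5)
The paper states this lemma without proof --- it is quoted from Weisz's monograph \cite{we3} --- so there is no in-paper argument to compare against. Your proposal is the standard atomic-decomposition proof and, modulo the caveat below, it is correct and is essentially the argument given in Weisz. The reduction via Lemma \ref{lemma-W2}, the inequality $\left(\sum_k x_k\right)^p\leq\sum_k x_k^p$ for $0<p\leq 1$, the splitting of $G$ into $I$ and $\overline{I}$, and the estimate $\int_I\vert Ta\vert^p\,d\mu\leq \mu(I)\Vert Ta\Vert_\infty^p\leq C^p$ from the $L_\infty$-boundedness together with property (b) of the atom are all exactly what is needed. The one genuine subtlety is the point you flag yourself at the end: for $0<p<1$ the martingale $f$ is not an $L_1$-function and the series $\sum_k\mu_k a_k$ converges only in the $H_p$-quasi-norm, so the pointwise inequality $\vert Tf\vert\leq\sum_k\vert\mu_k\vert\,\vert Ta_k\vert$ is not an automatic consequence of $\sigma$-sublinearity; one must either verify $Tf=\sum_k\mu_k Ta_k$ directly for the concrete operator (straightforward for the convolution-type operators $\sigma_m^\kappa$ used in this paper, which act through the Fourier coefficients), or argue by density of finite atomic combinations together with a limiting argument in the $p$-quasi-norm. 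You identify this as ``bookkeeping'' rather than carrying it out, which is acceptable for a proof sketch of a quoted classical lemma, but it is the only place where the argument is not fully self-contained.
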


For a martingale
\begin{equation*}
f=\sum_{n=0}^{\infty }\left( f_{n}-f_{n-1}\right)
\end{equation*}
the conjugate transforms are defined as
\begin{equation*}
\widetilde{f^{\left( t\right) }}=\sum_{n=0}^{\infty }r_{n}\left( t\right)
\left( f_{n}-f_{n-1}\right) ,
\end{equation*}%
where $t\in G$ is fixed. Note that $\widetilde{f^{\left( 0\right) }}=f.$ It
is well-known (see \cite{We1}) that
\begin{equation}\label{conj-trans}
\begin{split}
\left\Vert \widetilde{f^{\left( t\right) }}\right\Vert _{H_{p}\left(
	G\right) } &=\left\Vert f\right\Vert _{H_{p}\left( G\right) },\text{
}\left\Vert f\right\Vert _{H_{p}\left( G\right) }^{p}\sim
\int_{G}\left\Vert \widetilde{f^{\left( t\right) }}\right\Vert _{p}^{p}d\mu(t),
 \\
\text{ }\widetilde{\left( \sigma_{m}^\kappa(f)\right) ^{\left( t\right) }} &=
\sigma_{m}^\kappa(\widetilde{(f)^{\left( t\right) }}).  \end{split}
\end{equation}

\section{Strong convergence theorem and connecting results}

Our main Theorem reads as follows.
\begin{theorem}\label{thm-main1}
 Let $0<p\leq 1/2$. Then there exists a positive constant $c_p$ depending only on $p$, such that 
$$
\frac{1}{\log^{[p+1/2]} n}\sum_{m=1}^n 
\frac{\Vert \sigma_m^\kappa (f)\Vert_{H_p}^p}{m^{2-2p}}\leq 
c_p \Vert f\Vert_{H_p}^p
$$
holds for all $f\in H_p$.
\end{theorem}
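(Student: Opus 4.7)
My plan is to pull the theorem from its $H_p$-$H_p$ formulation down to an $L_p$-$H_p$ formulation using the conjugate transform identities (\ref{conj-trans}), and then attack the $L_p$-version atom by atom using Weisz's decomposition (Lemma \ref{lemma-W2}) together with the kernel estimates collected in Section 2.

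\textbf{The $L_p$-to-$H_p$ reduction.} The auxiliary statement I would aim at first is
\[
\sum_{m=1}^n\frac{\|\sigma_m^\kappa g\|_p^p}{m^{2-2p}}\leq c_p\log^{[p+1/2]} n\cdot\|g\|_{H_p}^p,\qquad g\in H_p.
\]
Granting this, apply it to $g=\widetilde{f^{(t)}}$, integrate in $t\in G$, swap sum and integral, and invoke in succession the three identities of (\ref{conj-trans}): $\widetilde{(\sigma_m^\kappa f)^{(t)}}=\sigma_m^\kappa\widetilde{f^{(t)}}$; the norm equivalence $\int_G\|\widetilde{h^{(t)}}\|_p^p\,d\mu(t)\sim\|h\|_{H_p}^p$ applied to $h=\sigma_m^\kappa f$; and $\|\widetilde{f^{(t)}}\|_{H_p}=\|f\|_{H_p}$ on the right. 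Dividing by $\log^{[p+1/2]} n$ then delivers the $H_p$ inequality of the theorem.

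\textbf{The atomic reduction.} For the auxiliary $L_p$-inequality, apply Lemma \ref{lemma-W2} to write $g=\sum_k\mu_k a_k$ with $p$-atoms $a_k$ and $\sum_k|\mu_k|^p\sim\|g\|_{H_p}^p$. Using $|x+y|^p\leq|x|^p+|y|^p$ (valid for $0<p\leq 1$) inside $\|\cdot\|_p^p$ and again inside the sum over $m$, the problem reduces to showing that for every $p$-atom $a$ with $\supp a\subset I_N$,
\[
\sum_{m=1}^n\frac{\|\sigma_m^\kappa a\|_p^p}{m^{2-2p}}\leq c_p\log^{[p+1/2]} n.
\]
Each $\kappa_k$ with $k<2^N$ depends only on $x_0,\dots,x_{|k|}$ with $|k|\leq N-1$ and is therefore constant on $I_N$; combined with $\int_{I_N}a\,d\mu=0$ this gives $\hat a^\kappa(k)=0$ for $k<2^N$, so $\sigma_m^\kappa a\equiv 0$ for $m\leq 2^N$ and only the range $2^N<m\leq n$ contributes.

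\textbf{The key atomic kernel estimate.} For $m>2^N$, split $\|\sigma_m^\kappa a\|_p^p$ into its contributions over $I_N$ and over $G\setminus I_N$. On $I_N$, the trivial bound combined with $\|K_m^\kappa\|_1\leq C$ from (\ref{norm-Fejer}), $\|a\|_\infty\leq 2^{N/p}$ and $\mu(I_N)=2^{-N}$ gives a uniform constant. On $G\setminus I_N$, expand $K_m^\kappa$ via Skvortsov's formula (Lemma \ref{lemma-Sk}), decompose the complement into the dyadic cells $J_N^{m,l}$ of Lemma \ref{lemma-GGN-SSMH}, and handle each Walsh kernel piece using G\'at's pointwise formula (Lemma \ref{lemma-Gat}) on the $D_{2^i}$ and $r_iK_{2^i}^w(\tau_i(\cdot))$ terms, together with the integrated G\'at-Goginava-Nagy estimate (Lemma \ref{lemma-GGN-SSMH}) on the $r_{|n|}K_{n-2^{|n|}}^w(\tau_{|n|}(\cdot))$ term. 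Summing over the cells produces $\|\sigma_m^\kappa a\|_p^p\leq c_p$ uniformly in $m>2^N$. Plugging this into the atomic sum gives
\[
\sum_{m=2^N+1}^n\frac{c_p}{m^{2-2p}}\leq c_p\begin{cases}2^{-N(1-2p)}/(1-2p), & 0<p<1/2,\\ \log(n/2^N)+1, & p=1/2,\end{cases}
\]
both alternatives being majorised by $c_p\log^{[p+1/2]} n$ (use $N\geq 0$ in the first case and $n>2^N$ in the second).

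\textbf{The main obstacle} is the uniform atomic bound $\|\sigma_m^\kappa a\|_p^p\leq c_p$. Because the Walsh-Kaczmarz system is not the character group of $G$, $K_m^\kappa$ is not a convolution kernel, and Skvortsov's expansion into the shifted Walsh kernels $K_{2^i}^w(\tau_i(\cdot))$ is essential; one must then juggle Lemmas \ref{lemma-Gat} and \ref{lemma-GGN-SSMH} over the dyadic cells $J_N^{m,l}$, summing against the large-norm factor $\|a\|_\infty\leq 2^{N/p}$ tightly enough that no spurious logarithm creeps in. The endpoint $p=1/2$ is the most delicate case: there the maximal operator $\sigma^{\kappa,*}$ is known (Goginava \cite{Gog-PM}) to fail the strong $(H_{1/2},L_{1/2})$ bound, which is precisely what forces the single $\log n$ factor on the right-hand side of the theorem.
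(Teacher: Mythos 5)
Your overall architecture is the same as the paper's: reduce the $H_p$--$H_p$ statement to an $L_p$--$H_p$ one via the conjugate transform identities \eqref{conj-trans}, reduce to a single $p$-atom, kill the range $m\le 2^N$, bound the contribution of $I_N$ by \eqref{norm-Fejer}, and on $\overline{I_N}$ expand $K_m^\kappa$ by Skvortsov's formula (Lemma \ref{lemma-Sk}) and estimate the pieces over the dyadic cells using Lemmas \ref{lemma-Gat} and \ref{lemma-GGN-SSMH}. However, the step you single out as ``the key atomic kernel estimate'' is wrong: the claimed uniform bound $\Vert\sigma_m^\kappa a\Vert_p^p\le c_p$ for all $m>2^N$ is false, and the theorem cannot be derived from it. Such a uniform bound would say that the operators $\sigma_m^\kappa$ are uniformly bounded from $H_p$ to $L_p$; for $0<p<1/2$ this contradicts the sharpness of the weight $(n+1)^{1/p-2}$ in Tephnadze's result \cite{T1}, and it also contradicts the paper's own second theorem (take, e.g., $\Phi(m)=m^{3/2}$ for small $p$: the hypothesis $\overline{\lim}\,2^{k(2-2p)}/\Phi(2^k)=\infty$ holds, yet $\sum_m\Phi(m)^{-1}<\infty$ would force the divergent series there to converge). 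Quantitatively, already the first Skvortsov piece gives
\begin{equation*}
\int_{\overline{I_N}}\left\vert (a*L_m^1)\right\vert^{p}d\mu \;\approx\; \frac{2^{N(1-p)}}{m^{p}}\sum_{t=0}^{N-1}2^{t(2p-1)},
\end{equation*}
which for $m\approx 2^N$ is of order $2^{N(1-2p)}$ when $0<p<1/2$ (and of order $N$ when $p=1/2$), not $O(1)$.

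What actually makes the proof work is keeping the full $m$-dependence of these atomic estimates and only then summing against the weight $m^{-(2-2p)}$: one gets bounds of the shape $c_p\,2^{N(1-p)}m^{-p}$ (and analogues from $L_m^2,L_m^3$ via \eqref{ertek-1}--\eqref{ertek-3}), and $\sum_{m\ge 2^N}2^{N(1-p)}m^{-(2-p)}\le c_p$ precisely because $2-p>1$; at the endpoint $p=1/2$ the sum over the dyadic shells $\sum_{t=0}^{N-1}2^{t(2p-1)}$ degenerates to $N$, and this extra factor is absorbed by the prefactor $1/\log n$ using $N\le\log_2 n$ --- which is the genuine source of the logarithm in the theorem, not the harmonic sum $\sum_{m=2^N}^{n}m^{-1}$ as your accounting suggests. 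A secondary point: the atomic reduction should be run through Weisz's Lemma \ref{lemma-weisz} (this is exactly why the $L_\infty\to L_\infty$ boundedness from \eqref{norm-Fejer} is needed), rather than by termwise application of $\sigma_m^\kappa$ to the decomposition of Lemma \ref{lemma-W2}, whose interchange with the infinite atomic sum is not justified for a general martingale.
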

\begin{proof}
During the proof of our main theorem we use the notation and some basic result of paper \cite{GN-CZMJ}. 
Let us suppose that
\begin{equation}\label{cond-1}
\frac{1}{\log^{[p+1/2]} n}{\sum_{m=1}^{n}}\frac{\left\Vert \sigma_{m}^\kappa (f)\right\Vert _{p}^{p}}{m^{2-2p}}\leq c_p\left\Vert f\right\Vert
_{H_{p}}^{p} 
\end{equation}
holds for all $f\in H_p$. 
Combining \eqref{conj-trans} and \eqref{cond-1} we have that
\begin{eqnarray}  \label{eq-1}
&&\frac{1}{\log^{[p+1/2]} n}\sum_{m=1}^{n}\frac{\left\Vert \sigma_{m}^\kappa (f)\right\Vert _{H_{p}}^{p}}{m^{2-2p}}\sim \frac{1}{\log^{[p+1/2]} n}%
\sum_{m=1}^{n}\int_{G}\frac{\left\Vert \widetilde{\left( \sigma^\kappa_{m}(f)\right)^{\left( t\right) }}\right\Vert _{p}^{p}}{m^{2-2p}}d\mu(t)
 \\
&=&\int_{G}\frac{1}{\log^{[p+1/2]} n}\sum_{m=1}^{n}\frac{\left\Vert \widetilde{\left(
		\sigma^\kappa_{m}(f)\right) ^{\left( t\right) }}\right\Vert _{p}^{p}}{m^{2-2p}}%
d\mu(t)=\int_{G}\frac{1}{\log^{[p+1/2]} n}\sum_{m=1}^{n}\frac{\left\Vert \sigma^\kappa_{m}(%
	\widetilde{ (f)^{\left( t\right) }})\right\Vert _{p}^{p}}{m^{2-2p}}d\mu(t)  \notag \\
&\leq& c_p\int_{G}\left\Vert \widetilde{ (f)^{\left( t\right) }}
\right\Vert _{H_{p}}^{p}d\mu(t)\sim c_p\int_{G}\left\Vert f\right\Vert
_{H_{p}}^{p}d\mu(t)\sim c_p\left\Vert f\right\Vert _{H_{p}}^{p}.  \notag
\end{eqnarray}

Since $\sigma_{n}^\kappa$ are bounded (see inequality \eqref{norm-Fejer}) from the
space $L_{\infty }$ to the space $L_{\infty }$, by Lemma \ref{lemma-weisz}
it is enough to prove that
\begin{equation*}
\frac{1}{\log^{[p+1/2]} n}\sum_{m=1}^{n}\frac{\left\Vert \sigma^\kappa_{m}(a)\right\Vert
	_{p}^{p}}{m^{2-2p}}<c_p<\infty
\end{equation*}
for every arbitrary $p$-atom $a.$ This leads us to inequality \eqref{cond-1}.

Let $a$ be an arbitrary $p$-atom with support $I$ and $\mu
(I)=2^{-N}$. Without loss of generality, we may assume that $%
I:=I_{N}$. It is easily seen that ${\sigma}_{n}^\kappa(a)=0$
if $n\leq 2^{N}.$ Therefore, we set $n>2^{N}$.

We can write
\begin{equation*}
\begin{split}
\frac{1}{\log^{[p+1/2] }n}&\sum_{m=1}^{n} \frac{\left\Vert \sigma^\kappa_{m}(a)\right\Vert_{p}^{p}}{m^{2-2p}}\leq \frac{1}{\log^{[p+1/2]} n}\sum_{m=2^{N}}^{n}%
\frac{\left\Vert {\sigma^\kappa_{m}(a)}\right\Vert _{p}^{p}}{m^{2-2p}} \\
& \leq \frac{1}{\log^{[p+1/2]} n}\sum_{m=2^{N}}^{n}\int_{I_{N}}\frac{%
	\left\vert {\sigma^\kappa_{m}(a)}\right\vert^{p}}{m^{2-2p}}d\mu 
+\frac{1}{\log^{[p+1/2]} n}\sum_{m=2^{N}}^{n}\int_{\overline{I_{N}}}%
\frac{\left\vert {\sigma^\kappa_{m}(a)}\right\vert ^{p}}{m^{2-2p}}d\mu  \\
& =:I_{1}+I_{2}.
\end{split}%
\end{equation*}%
Inequality \eqref{norm-Fejer} implies
\begin{eqnarray*}
	I_{1} &\leq &\frac{1}{\log^{[p+1/2]} n}\sum_{m=2^{N}}^{\infty }\int_{I_{N}}%
	\frac{\left\vert \sigma^\kappa_{m}(a)\right\vert ^{p}}{m^{2-2p}}d\mu \\
	&\leq &\frac{c_p}{\log^{[p+1/2]} n}\sum_{m=2^{N}}^{\infty }\frac{1}{m^{2-2p}}\left\Vert
	a\right\Vert _{\infty }^{p}/2^{N}\\
	&\leq& \frac{c_p}{\log^{[p+1/2]} n}\sum_{m=2^{N}}^{n}%
	\frac{1}{m^{2-2p}}<c_p
\end{eqnarray*}
for $0<p\leq 1/2$.

Now, we estimate the expression $I_{2}$.
Lemma \ref{lemma-Sk} yields
 that
 \begin{eqnarray*}
 	|{\sigma}_n^\kappa (f)|
 	&=&{|f*K_n^\kappa|}\leq
 	\left| f*\frac{1}{n }\left( 1+ \sum_{i=0}^{|n|-1}2^i D_{2^i}\right)\right|\\
 	&+&
 	\left| f*\frac{1}{n }  \sum_{i=0}^{|n|-1}2^i r_i K_{2^i}^w\circ \tau_{i}\right|
 	+\left| f*\frac{n-2^{|n|}}{n }\left( D_{2^{|n|}}+ r_{|n|} K_{n-2^{|n|}}^w \circ \tau_{|n|}\right)\right|\\
 	&=:&\sum_{i=1}^3 |f* L_n^i|.
 \end{eqnarray*}
It is easily seen that
\begin{equation}\label{eq-2}
\begin{split}
|(a* L_n^i)(x)|&\leq  \int_{I_N}|a(s)||L_n^i(x+s)|d\mu(s)\leq 
\Vert a\Vert_\infty \int_{I_N}|L_n^i(x+s)|d\mu(s)\\
&\leq 2^{N/p}\int_{I_N}|L_n^i(x+s)|d\mu(s)
\end{split}
\end{equation}
for $i=1,2,3$ (and $n>2^N$). For expression $I_2$ we have 
\begin{eqnarray*}
I_2\leq \frac{1}{\log^{[p+1/2]} n}\sum_{m=2^{N}}^{n}
\frac{\sum_{i=1}^3\int_{\overline{I_{N}}}\left\vert (a*L_m^i)(x)\right\vert ^{p}d\mu(x)}{m^{2-2p}}
=:I_2^1+I_2^2+I_2^3.
\end{eqnarray*}
First, we discuss the expression $I_2^1$. 
We decompose  the set $\overline{I_N}$ as 
\begin{equation}\label{decomp}
\overline{I_N}=\bigcup_{j=0}^{N-1} \left( I_j\backslash I_{j+1}\right).
\end{equation}
Set $x\in I_j\backslash I_{j+1}$ and $s\in I_N$, then $x+s\in I_j\backslash I_{j+1}$ for $j=0,...,N-1$. Applying \eqref{eq-2} and \eqref{Dir}, we have
\begin{eqnarray*}
	\int_{I_N} |L_m^1(x+s)|d\mu(s)
	&\leq& \int_{I_N}\frac{1}{m }\left( 1+ \sum_{i=0}^{j}2^i D_{2^i}(x+s)\right)d\mu(s)\\
	&\leq& \frac{c}{m } 2^{2j} 2^{-N}
\end{eqnarray*}
and 
\begin{eqnarray*}
I_2^1&\leq &
\frac{1}{\log^{[p+1/2]} n}\sum_{m=2^{N}}^{n}
\frac{\sum_{t=0}^{N-1}\int_{I_t\backslash I_{t+1}}\left\vert (a*L_m^1)(x)\right\vert ^{p}d\mu(x)}{m^{2-2p}}\\
&\leq&
\frac{1}{\log^{[p+1/2]} n}\sum_{m=2^{N}}^{n}
\frac{\sum_{t=0}^{N-1} \frac{2^N2^{2tp}}{m^p 2^{Np}}2^{-t}}{m^{2-2p}}\\
&\leq&
\frac{c}{\log^{[p+1/2]} n}\sum_{m=2^{N}}^{n}\frac{2^{N(1-p)}}{m^{2-p}}\sum_{t=0}^{N-1}
2^{t(2p-1)}.
\end{eqnarray*}
If $p=1/2$ we have 
$$
I_2^1\leq \frac{c}{\log n}\sum_{m=2^{N}}^{n}\frac{2^{N/2}}{m^{3/2}} N\leq c
$$
and 
if $0<p< 1/2$ 
$$
I_2^1\leq c_p\sum_{m=2^{N}}^{n}\frac{2^{N(1-p)}}{m^{2-p}}\leq c_p.
$$

Second, we discuss the expression $I_2^2$.
We use the disjoint decomposition \eqref{decomp} of $\overline{I_N}$ and we decompose the sets $I_t\backslash I_{t+1}$ as the following disjoint union:
$$
I_t\backslash I_{t+1}=\bigcup_{l=t+1}^N J_t^l,
$$
where $J_t^l:=I_N(0,...,0,x_t=1,0,...,0,x_l=1,x_{l+1},...,x_{N-1})$
for $t<l<N$ and
$J_t^l:=I_N(e_t)$ for $l=N$.
Let $x\in J_t^l$ and $s\in I_N$, then $x+s\in J_t^l$ ($0\leq t<N$, $t<l\leq N$).

For $0\leq t <l<N$, the next inequality showed in \cite[page 681.]{GN-CZMJ}
\begin{equation}\label{ertek-1}
\int_{I_N} |L_m^2(x+s)|d\mu(s)
\leq c\frac{2^{2t}+2^{2l-t}}{m}2^{-N}.
\end{equation}
For $0\leq t<l=N$, it is showed in \cite[page 681.]{GN-CZMJ}
that
\begin{equation}\label{ertek-2}
\int_{I_N} |L_m^2(x+s)|d\mu(s)
\leq c\frac{2^{2t-N}+2^{N-t}+2^{|m|-t}}{m}.
\end{equation}
The decomposition of $\overline{I_N}$ yields
\begin{eqnarray*}
\int_{\overline{I_N}}\left\vert (a*L_m^2)(x)\right\vert ^{p}d\mu(x)
&=&\sum_{t=0}^{N-1}\int_{I_t\backslash I_{t+1}}\left\vert (a*L_m^2)(x)\right\vert ^{p}d\mu(x)\\
&=&\sum_{t=0}^{N-1}\sum_{l=t+1}^{N-1}\int_{J_t^l}\left\vert (a*L_m^2)(x)\right\vert ^{p}d\mu(x)\\
&&+\sum_{t=0}^{N-1}\int_{J_t^N}\left\vert (a*L_m^2)(x)\right\vert ^{p}d\mu(x)
\end{eqnarray*}
and
\begin{eqnarray}
I_2^2&\leq &
\frac{1}{\log^{[p+1/2]} n}\sum_{m=2^{N}}^{n}
\frac{\sum_{t=0}^{N-1}\sum_{l=t+1}^{N-1}\int_{J_t^l}\left\vert (a*L_m^2)(x)\right\vert ^{p}d\mu(x)}{m^{2-2p}}\notag\\
&&+\frac{1}{\log^{[p+1/2]} n}\sum_{m=2^{N}}^{n}
\frac{\sum_{t=0}^{N-1}\int_{J_t^N}\left\vert (a*L_m^2)(x)\right\vert ^{p}d\mu(x)}{m^{2-2p}}\\
&=:&I_2^{2,1}+I_2^{2,2}.\notag
\end{eqnarray}
For $I_2^{2,1}$ we apply inequality \eqref{eq-2} and \eqref{ertek-1} 
\begin{eqnarray*}
I_2^{2,1}&\leq &
\frac{1}{\log^{[p+1/2]} n}\sum_{m=2^{N}}^{n}
\frac{\sum_{t=0}^{N-1}\sum_{l=t+1}^{N-1}
c_p2^N\frac{(2^{2t}+2^{2l-t})^p}{m^p}2^{-Np}	2^{-l}
	}{m^{2-2p}}\notag\\
&\leq&\frac{c_p}{\log^{[p+1/2]} n}\sum_{m=2^{N}}^{n}
2^{N(1-p)}\frac{\sum_{t=0}^{N-1}\sum_{l=t+1}^{N-1}
	(2^{2tp}+2^{(2l-t)p})	2^{-l}}{m^{2-p}}.
\end{eqnarray*}
For $p=1/2$ we get 
$$ 
I_2^{2,1}\leq \frac{c_p}{\log n}2^{N/2}\sum_{m=2^{N}}^{n}
\frac{N}{m^{3/2}}\leq c_p
$$
and for $0<p<1/2$ we have
$$ 
I_2^{2,1}\leq c_p 2^{N(1-p)}
\sum_{m=2^{N}}^{n}\sum_{t=0}^{N-1}\sum_{l=t+1}^{N-1}\frac{
	2^{2tp-l}+2^{l(2p-1)-tp}	}{m^{2-p}}
\leq   2^{N(1-p)}
\sum_{m=2^{N}}^{n} \frac{c_p}{m^{2-p}}\leq c_p.
$$
By inequality \eqref{eq-2} and \eqref{ertek-2} we write 
\begin{eqnarray*}
	I_2^{2,2}&\leq &\frac{c_p}{\log^{[p+1/2]} n}\sum_{m=2^{N}}^{n}
	\frac{\sum_{t=0}^{N-1}
		2^N\frac{(2^{2t-N}+2^{N-t}+2^{|m|-t})^p}{m^p}2^{-N}
		}{m^{2-2p}}\\
&\leq& \frac{c_p}{\log^{[p+1/2]} n}\sum_{m=2^{N}}^{n}
	\frac{\sum_{t=0}^{N-1}
		{(2^{(2t-N)p}+2^{(|m|-t)p})}
	}{m^{2-p}}.
\end{eqnarray*}
We devide the expression $I_2^{2,2}$ into two parts
\begin{eqnarray*}
 \frac{c_p}{\log^{[p+1/2]} n}\sum_{m=2^{N}}^{n}\frac{1}{m^{2-p}}
\sum_{t=0}^{N-1}
 	2^{(2t-N)p} 
 &\leq&  \frac{c_p}{\log^{[p+1/2]} n}\sum_{m=2^{N}}^{n}\frac{1}{m^{2-p}} 2^{Np}\\
 &\leq&  \frac{c_p}{\log^{[p+1/2]} n}2^{N(2p-1)}\leq c_p
\end{eqnarray*}
and
$$
\frac{c_p}{\log^{[p+1/2]} n}\sum_{m=2^{N}}^{n}\frac{1}{m^{2-2p}}
\sum_{t=0}^{N-1}
\frac{2^{(|m|-t)p}}{m^p}
\leq \frac{c_p}{\log^{[p+1/2]} n}\sum_{m=2^{N}}^{n}\frac{1}{m^{2-2p}}
\leq c_p
$$
for $0<p\leq 1/2$.

At last, we discuss the expression $I_2^3$.
We use Lemma \ref{lemma-GGN-SSMH} and the following disjoint decomposition of $\overline{I_N}$:
$$
\overline{I_N}=\bigcup_{l=0}^{N-1}\bigcup_{k=-1}^{l}
J_N^{k,l},
$$
where the set $J_N^{k,l}$ is defined in Lemma \ref{lemma-GGN-SSMH}.

If $x\in \overline{I_N}$ and $s\in I_N$, then $x+s\in \overline{I_N}$ and $D_{2^{|n|}}(x+s)=0$.
Moreover, if $x\in J_N^{k,l}$, then $x+s\in J_N^{k,l}$ and by Lemma  \ref{lemma-GGN-SSMH} we have
\begin{equation}\label{ertek-3}
\int_{I_N} |L_m^3(x+s)|d\mu(s)
\leq c\frac{2^{|m|}}{m2^{l+k}}.
\end{equation}
The decomposition of $\overline{I_N}$ yields
\begin{eqnarray}\label{L3-decomp}
	\int_{\overline{I_N}}\left\vert (a*L_m^3)(x)\right\vert^{p}d\mu(x)
	&=&\sum_{l=0}^{N-1}\sum_{k=-1}^l\int_{J_N^{k,l}}\left\vert (a*L_m^3)(x)\right\vert ^{p}d\mu(x).
\end{eqnarray}
Inequalities \eqref{eq-2}, \eqref{ertek-3} and \eqref{L3-decomp} yield
\begin{eqnarray*}
I_2^3&\leq& \frac{1}{\log^{[p+1/2]} n}\sum_{m=2^{N}}^{n}
\frac{\sum_{l=0}^{N-1}\sum_{k=-1}^l\int_{J_N^{k,l}}\left\vert (a*L_m^3)(x)\right\vert ^{p}d\mu(x)}{m^{2-2p}}\\
&\leq& \frac{c_p}{\log^{[p+1/2]} n}\sum_{m=2^{N}}^{n}
\frac{\sum_{l=0}^{N-1}\sum_{k=-1}^l
2^{-N}2^{(-l-k)p}2^{-(N-k)}
	}{m^{2-2p}}\\
&\leq& \frac{c_p}{\log^{[p+1/2]} n}\sum_{m=2^{N}}^{n}2^{-2N}
\frac{\sum_{l=0}^{N-1}2^{-lp}\sum_{k=-1}^l
	2^{k(1-p)}
}{m^{2-2p}}\\
&\leq& \frac{c_p}{\log^{[p+1/2]} n}\sum_{m=2^{N}}^{n}2^{-2N}
\frac{\sum_{l=0}^{N-1}2^{l(1-2p)}
}{m^{2-2p}}\leq c_p
\end{eqnarray*}
for $0<p\leq 1/2$.
Summarizing our results for expressions $I_1,I_2^1,I_2^{2,1},I_2^{2,2}$ and 
$I_2^3$ we complete the proof of our main Theorem \ref{thm-main1}.
\end{proof}

In the next theorem we show the sharpness of the statement of Theorem \ref{thm-main1} in case $0<p<1/2$.
\begin{theorem}
Let $0<p<1/2$ and $\Phi \colon \mathbb{N}_{+}\rightarrow
\lbrack 1, \infty )$  is any non-decreasing function,
	satisfying the conditions  $\Phi \left( n\right) \uparrow \infty $  and  
\begin{equation*}
\overline{\underset{k\rightarrow \infty }{\lim }}\frac{2^{k\left(
		2-2p\right) }}{\Phi \left( 2^{k}\right) }=\infty .
\end{equation*}
 Then there exists a martingale  $F\in H_{p},$  such that  
\begin{equation*}
\underset{m=1}{\overset{\infty }{\sum }}\frac{\left\Vert \sigma
	_{m}F\right\Vert _{L_{p,\infty }}^{p}}{\Phi \left( m\right) }=\infty .
\end{equation*} 
\end{theorem}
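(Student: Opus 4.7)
The plan is to construct a single lacunary martingale $F \in H_p$ whose Walsh-Kaczmarz-Fej\'er means are pointwise large along a sparse test sequence of indices dictated by the hypothesis on $\Phi$. First, using $\overline{\lim}_k 2^{k(2-2p)}/\Phi(2^k) = \infty$, I would extract an increasing integer sequence $\{\alpha_k\}$ growing so rapidly that $\alpha_k > 2\alpha_{k-1}$ and $2^{\alpha_k(2-2p)}/\Phi(2^{\alpha_k+1}) \geq k^{3}$ for every $k$. The martingale is then built from normalized dyadic blocks
$$a_k := c_p\,2^{\alpha_k(1/p-1)}\bigl(D_{2^{\alpha_k+1}}^\kappa - D_{2^{\alpha_k}}^\kappa\bigr),$$
which, by \eqref{Dir}, are $p$-atoms supported on $I_{\alpha_k}$ with vanishing integral. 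Setting
$$F^{(n)} := \sum_{\{k\,:\,\alpha_k+1\leq n\}} \mu_k\,a_k, \qquad \mu_k := k^{-2/p},$$
one has $\sum_k|\mu_k|^p<\infty$, so the atomic characterization (Lemma~\ref{lemma-W2}) yields $F\in H_p$.

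Next, I would test the Fej\'er means at carefully chosen indices $m_k$ lying in the range $(2^{\alpha_k},2^{\alpha_k+1}]$ and expand $\sigma_{m_k}^\kappa F = \sum_j\mu_j\sigma_{m_k}^\kappa a_j$. Indices $j>k$ contribute nothing since the atom $a_j$ has Fourier support above $2^{\alpha_j}>m_k$, so $\sigma_{m_k}^\kappa a_j=0$. The diagonal contribution $\mu_k\sigma_{m_k}^\kappa a_k$ can be evaluated explicitly: unrolling the partial-sum definition and invoking the identity $\kappa_{2^{\alpha_k}+i}=r_{\alpha_k}\,w_i\circ\tau_{\alpha_k}$ for $i<2^{\alpha_k}$ reduces it to the product of $r_{\alpha_k}$ with a Walsh-Paley Fej\'er kernel evaluated at $\tau_{\alpha_k}(x)$, whose pointwise size is furnished by Lemma~\ref{lemma-Gat}. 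The tail from $j<k$ is controlled via Skvortsov's identity (Lemma~\ref{lemma-Sk}) applied to $K_{m_k}^\kappa$ together with the lacunarity $\alpha_k>2\alpha_{k-1}$ and the uniform $L^1$ bound \eqref{norm-Fejer}.

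The main obstacle is the precise pointwise lower bound: I would need to locate a measurable set $E_k\subset G$ on which the diagonal contribution dominates the $j<k$ tail and produces the weak-$L_p$ estimate $\Vert\sigma_{m_k}^\kappa F\Vert_{L_{p,\infty}}^p\gtrsim \mu_k^p\cdot 2^{\alpha_k(2-2p)}$. The delicate point is choosing the index $m_k$ and the set $E_k$ in tandem, so that the factor extracted from the Dirichlet-kernel structure of $a_k$ and the magnifying factor supplied by Lemma~\ref{lemma-Gat} combine to give exactly the weight $2^{\alpha_k(2-2p)}$ required by the hypothesis on $\Phi$. This is the crux of the sharpness argument and parallels the Walsh-Paley constructions of \cite{T1,T2}, transported to the Walsh-Kaczmarz setting via the coordinate transformation $\tau_{\alpha_k}$. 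Once the lower bound is secured,
\begin{align*}
\sum_{m=1}^\infty \frac{\Vert\sigma_m^\kappa F\Vert_{L_{p,\infty}}^p}{\Phi(m)}
&\geq c\sum_k\frac{\mu_k^p\cdot 2^{\alpha_k(2-2p)}}{\Phi(m_k)}\\
&\geq c\sum_k\frac{1}{k^2}\cdot\frac{2^{\alpha_k(2-2p)}}{\Phi(2^{\alpha_k+1})}\geq c\sum_k k=\infty,
\end{align*}
which completes the proof of the sharpness statement.
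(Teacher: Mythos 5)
There is a genuine gap, and it sits exactly where you place ``the crux'': the single-index lower bound $\Vert\sigma_{m_k}^\kappa F\Vert_{L_{p,\infty}}^p\gtrsim \mu_k^p\,2^{\alpha_k(2-2p)}$ that your divergence computation relies on is not just unproved --- it is false. By Tephnadze's sharp maximal-operator result quoted in the introduction (boundedness of $\sup_n|\sigma_n^\kappa f|/(n+1)^{1/p-2}$ from $H_p$ to $L_p$ for $0<p<1/2$), each atom satisfies $\Vert\sigma_n^\kappa a_j\Vert_{L_{p,\infty}}^p\leq\Vert\sigma_n^\kappa a_j\Vert_p^p\leq c_p (n+1)^{(1/p-2)p}=c_p(n+1)^{1-2p}$, so for $m_k\sim 2^{\alpha_k}$ the $p$-quasi-triangle inequality gives $\Vert\sigma_{m_k}^\kappa F\Vert_{L_{p,\infty}}^p\leq c_p\,2^{\alpha_k(1-2p)}\sum_j\mu_j^p$, which is smaller than your claimed bound by a factor of $2^{\alpha_k}$ (the same contradiction follows from Theorem~\ref{thm-main1} itself applied to a single block). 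The correct achievable order per index is $\mu_k^p\,2^{\alpha_k(1-2p)}$, and with only one test index per block your final sum becomes $\sum_k k^{-2}\,2^{-\alpha_k}\,2^{\alpha_k(2-2p)}/\Phi(2^{\alpha_k+1})$, which you cannot force to diverge. The missing idea is that the divergence must be harvested from the \emph{multitude} of indices in each dyadic block: one needs a lower bound of order $\mu_k^p 2^{\alpha_k(1-2p)}$ valid \emph{simultaneously} for about $2^{\alpha_k}$ different values of $n\in(2^{\alpha_k},2^{\alpha_k+1})$, and then the sum over the whole block supplies the extra factor $2^{\alpha_k}$.

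This is precisely what the paper does, by a mechanism quite different from the one you sketch. Instead of hunting for an exceptional set $E_k$ via Lemma~\ref{lemma-Gat}, the paper works on the \emph{fixed} set $I_2(e_0+e_1)$ of measure $1/4$, where \eqref{Dir} and Lemmas~\ref{lemma-Sk}, \ref{lemma-Gat} force $D_{2^n}=K_{2^n}^\kappa=0$ for $n\geq 2$; this annihilates all contributions from earlier atoms and from the ``completed block'' part of $\sigma_n^\kappa a_k$, leaving only the tail piece $IV_2=\frac{\Phi^{1/2p}}{n}\sum_{j=1}^{n-2^{|\alpha_k|}}\sum_{i=0}^{j-1}\kappa_{2^{|\alpha_k|}+i}(x)$. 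For every $n$ in the arithmetic-type family $\mathbb{A}_{0,2}$ the total number of $\pm1$ summands is odd, so the sum is a nonzero integer and $|IV_2|\geq \Phi^{1/2p}(2^{|\alpha_k|+1})/2^{|\alpha_k|+1}$ pointwise on $I_2(e_0+e_1)$ --- a parity argument, with no exceptional-set analysis and no use of the magnification in Lemma~\ref{lemma-Gat}. Summing this uniform bound over the $\sim 2^{|\alpha_k|}/4$ indices $n\in\mathbb{A}_{0,2}$ in the block yields the divergent series. (Your choice $\mu_k=k^{-2/p}$ with $2^{\alpha_k(2-2p)}/\Phi(2^{\alpha_k+1})\geq k^3$ would be a workable alternative normalization to the paper's $\lambda_k=\Phi^{1/2p}(2^{|\alpha_k|+1})2^{-|\alpha_k|(1/p-1)}$, but only after the argument is restructured around block summation; as written, the proposal does not close.)
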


\begin{proof}
Let $\Phi \left( n\right) $
be non-decreasing function, satisfying condition
\begin{equation} \label{12j}
\underset{k\rightarrow \infty }{\lim }\frac{2^{\left( \left\vert n_{k}\right\vert +1\right) \left( 2-2p\right) }}{\Phi \left( 2^{\left\vert n_{k}\right\vert +1}\right) }=\infty.  
\end{equation}

Under condition (\ref{12j}), there exists a sequence $\left\{ \alpha _{k}:%
\text{ }k\geq 0\right\} \subset \left\{ n_{k}:\text{ }k\geq 0\right\} $ such
that
\begin{equation}
\left\vert \alpha _{k}\right\vert \geq 2,\text{ \quad for all \ }k\geq 0
\label{122}
\end{equation}%
and
\begin{equation}
\sum_{\eta =0}^{\infty }\frac{\Phi ^{1/2}\left( 2^{\left\vert \alpha _{\eta
		}\right\vert +1}\right) }{2^{\left\vert \alpha _{\eta }\right\vert \left(
		1-p\right) }}=2^{1-p}\sum_{\eta =0}^{\infty }\frac{\Phi ^{1/2}\left(
	2^{\left\vert \alpha _{\eta }\right\vert +1}\right) }{2^{\left( \left\vert
		\alpha _{\eta }\right\vert +1\right) \left( 1-p\right) }}<c<\infty .
\label{121}
\end{equation}

Let \qquad
\begin{equation*}
F_{n}=\sum_{\left\{ k:\text{ }\left\vert \alpha _{k}\right\vert <n\right\}
}\lambda _{k}a_{k},
\end{equation*}%
where
\begin{equation*}
\lambda _{k}=\frac{\Phi ^{1/2p}\left( 2^{\left\vert \alpha _{k}\right\vert
		+1}\right) }{2^{\left( \left\vert \alpha _{k}\right\vert \right) \left(
		1/p-1\right) }}
\end{equation*}%
and
\begin{equation*}
a_{k}=2^{\left\vert \alpha _{k}\right\vert \left( 1/p-1\right) }\left(
D_{2^{\left\vert \alpha _{k}\right\vert +1}}-D_{2^{\left\vert \alpha
		_{k}\right\vert }}\right) .
\end{equation*}

It is easy to show that the martingale $F=\left( F_{n},\text{ }n\in
\mathbb{N}\right) \in H_{p}.$
Indeed, since
\begin{equation*}
S_{2^{n}}a_{k}=\left\{
\begin{array}{l}
a_{k},\text{ \quad }\left\vert \alpha _{k}\right\vert <n, \\
0,\text{ \quad }\left\vert \alpha _{k}\right\vert \geq n,
\end{array}
\right.
\end{equation*}

\begin{eqnarray*}
\supp (a_{k})=I_{\left\vert \alpha _{k}\right\vert }, \quad 
\int_{I_{\left\vert \alpha _{k}\right\vert }}a_{k}d\mu =0 \quad \text{and} \quad \left\Vert a_{k}\right\Vert _{\infty }\leq 2^{\left\vert \alpha_{k}\right\vert /p}=\left( \mu (\supp a_{k})\right) ^{-1/p}
\end{eqnarray*}
if we apply Lemma \ref{lemma-W2} and (\ref{121}) we conclude that $F\in H_{p}.$

It is easily seen that

\begin{eqnarray} \label{6aa}
\widehat{F}^\kappa(j)  
=
\begin{cases}
\Phi ^{1/2p}\left( 2^{\left\vert \alpha _{k}\right\vert +1}\right) ,&
\text{ if }j\in \left\{ 2^{\left\vert \alpha
	_{k}\right\vert },...,2^{\left\vert \alpha _{k}\right\vert +1}-1\right\} ,%
\text{ }k=0,1,2..., \\
0,&\text{  if }j\notin
\bigcup\limits_{k=0}^{\infty }\left\{ 2^{\left\vert \alpha _{k}\right\vert
},...,2^{\left\vert \alpha _{k}\right\vert +1}-1\right\} .
\end{cases}%
\end{eqnarray}

Let $2^{\left\vert \alpha _{k}\right\vert }<n<2^{\left\vert \alpha
	_{k}\right\vert +1}.$ Using (\ref{6aa}) we can write 
\begin{equation}
\sigma^\kappa_{n}F=\frac{1}{n}\sum_{j=1}^{2^{\left\vert \alpha _{k}\right\vert
}}S^\kappa_{j}F+\frac{1}{n}\sum_{j=2^{\left\vert \alpha _{k}\right\vert
	}+1}^{n}S^\kappa_{j}F=III+IV.  \label{7aa}
\end{equation}

It is simple to show that%
\begin{equation}
S^\kappa_jF=\left\{
\begin{array}{l}
0,\,\ \text{ if \thinspace \thinspace }0\leq j\leq 2^{\left\vert \alpha_{0}\right\vert } \\
\Phi ^{1/2p}\left( 2^{\left\vert \alpha _{0}\right\vert +1}\right) \left(
D^\kappa_j-D_{2^{\left\vert \alpha _{0}\right\vert }}\right) ,\text{
\thinspace\ \thinspace if \thinspace \thinspace \thinspace }2^{\left\vert
\alpha _{0}\right\vert }<j\leq 2^{\left\vert \alpha _{0}\right\vert +1}.
\text{ }
\end{array}
\right.  \label{7aaa}
\end{equation}

Suppose that $2^{\left\vert \alpha _{s}\right\vert }<j\leq 2^{\left\vert
	\alpha _{s}\right\vert +1},$ for some $s=1,2,...,k.$ Then applying (\ref{6aa}%
) we have%
\begin{eqnarray}
S^\kappa_jF &=&\sum_{v=0}^{2^{\left\vert \alpha _{s-1}\right\vert +1}-1}\widehat{F}^\kappa(v)\kappa_{v}+\sum_{v=2^{^{\left\vert \alpha _{s}\right\vert }}}^{j-1}\widehat{F}^\kappa
(v)\kappa_{v}  \label{8aa} \\
&=&\sum_{\eta =0}^{s-1}\sum_{v=2^{\left\vert \alpha _{\eta }\right\vert
}}^{2^{\left\vert \alpha _{\eta }\right\vert +1}-1}\widehat{F}^\kappa
(v)\kappa_{v}+\sum_{v=2^{\left\vert \alpha_{s}\right\vert }}^{j-1}\widehat{F}^\kappa
(v)\kappa_{v}  \notag \\
&=&\sum_{\eta =0}^{s-1}\sum_{v=2^{\left\vert \alpha _{\eta }\right\vert
}}^{2^{\left\vert \alpha _{\eta }\right\vert +1}-1}\Phi ^{1/2p}\left(
2^{\left\vert \alpha _{\eta }\right\vert +1}\right)\kappa_{v}+\Phi ^{1/2p}\left(2^{\left\vert \alpha_{s}\right\vert +1}\right) \sum_{v=2^{\left\vert \alpha_{s}\right\vert }}^{j-1}\kappa_{v}  \notag \\
&=&\sum_{\eta =0}^{s-1}\Phi ^{1/2p}\left( 2^{\left\vert \alpha _{\eta}\right\vert +1}\right)\left( D_{2^{\left\vert \alpha _{\eta }\right\vert+1}}-D_{2^{\left\vert \alpha _{\eta }\right\vert }}\right)  \notag \\
&&+\Phi ^{1/2p}\left( 2^{\left\vert \alpha _{s}\right\vert +1}\right) \left(D^\kappa_j-D_{2^{\left\vert \alpha _{s}\right\vert }}\right) .  \notag
\end{eqnarray}

Let $2^{\left\vert \alpha _{s}\right\vert +1}\leq j\leq 2^{\left\vert \alpha
	_{s+1}\right\vert },$ $s=0,1,...k-1.$ Analogously of (\ref{8aa}) we get%
\begin{equation} \label{10aa}
S^\kappa_{j}F=\sum_{v=0}^{2^{\left\vert \alpha_{s}\right\vert+1}} \widehat{F}^\kappa(v)\kappa_v=\sum_{\eta =0}^{s}\Phi^{1/2p}\left(2^{\left\vert \alpha_{\eta}\right\vert +1}\right)\left(D_{2^{\left\vert\alpha_{\eta} \right\vert+1}}-D_{2^{\left\vert \alpha_{\eta }\right\vert}}\right).  
\end{equation}

Let $x\in I_{2}\left( e_{0}+e_{1}\right) .$ Since (see (\ref{Dir}), Lemma \ref{lemma-Sk} Lemma \ref{lemma-Gat})
\begin{equation}\label{40}
D_{2^n}\left(x\right) =K^\kappa_{2^{n}}\left(x\right)=K^\kappa_{2^{n+1}}\left(x\right)-K^\kappa_{2^{n}}\left(x\right) =0,\text{ for }n\geq 2,
\end{equation}%
from (\ref{122}) and (\ref{8aa}-\ref{10aa}) we obtain%
\begin{equation} \label{9aaa}
III=\frac{1}{n}\sum_{\eta =0}^{k-1}\Phi ^{1/2p}\left( 2^{\left\vert \alpha_{\eta}\right\vert+1}\right)\sum_{v=2^{\left\vert \alpha _{\eta
}\right\vert}+1}^{2^{\left\vert \alpha_{\eta}\right\vert+1}} D^\kappa_{v}\left(x\right)  
\end{equation}%
\begin{equation*}
=\frac{1}{n}\sum_{\eta =0}^{k-1}\Phi ^{1/2p}\left( 2^{\left\vert \alpha
_{\eta }\right\vert +1}\right) \left( 2^{\left\vert \alpha _{\eta
}\right\vert +1}K^\kappa_{2^{\left\vert \alpha _{\eta }\right\vert +1}}\left(x\right)-2^{\left\vert \alpha _{\eta }\right\vert }K^\kappa_{2^{\left\vert \alpha_{\eta }\right\vert }}\left( x\right) \right) =0.
\end{equation*}

Applying (\ref{8aa}), when $s=k$ in $IV$ we have
\begin{eqnarray} \label{9aa}
IV &=&\frac{n-2^{\left\vert \alpha _{k}\right\vert }}{n}\sum_{\eta
=0}^{k-1}\Phi^{1/2p}\left(2^{\left\vert\alpha_{\eta }\right \vert+1}\right) \left( D_{2^{\left\vert \alpha _{\eta }\right\vert
+1}}-D_{2^{\left\vert \alpha _{\eta }\right\vert }}\right)   \\
&&+\frac{\Phi ^{1/2p}\left( 2^{\left\vert \alpha _{k}\right\vert +1}\right)}{n}\sum_{j=2^{_{\left\vert \alpha _{k}\right\vert }}+1}^{n}\left(D^\kappa_j-D_{2^{\left\vert\alpha_k\right\vert }}\right)=IV_1+IV_2.
\notag
\end{eqnarray}

Combining (\ref{122}) and (\ref{40}) we get
\begin{equation}
IV_{1}=0,\text{ \ for \ }x\in I_{2}\left( e_{0}+e_{1}\right) .  \label{8aaaa}
\end{equation}

Let $x\in I_{2}\left(
e_{0}+e_{1}\right) $, $2^{\left\vert \alpha _{k}\right\vert}<n<2^{\left\vert \alpha _{k}\right\vert +1}$ and $n\in \mathbb{A}_{0,2}$ , where  
$\mathbb{A}_{0,2}$ is defined by 
\begin{equation*}
\mathbb{A}_{0,2}:=\left\{ n\in \mathbb{N}:\text{ }n=2^0+2^2+
\sum_{i=3}^{s}n_i2^{i}\right\}.
\end{equation*}
We have that
\begin{equation} \label{30}
D^\kappa_{j+2^{\left\vert \alpha _{k}\right\vert}}=D_{2^{\left\vert \alpha _{k}\right\vert}}+\sum_{j
=2^{\left\vert \alpha _{k}\right\vert}}^{2^{\left\vert \alpha _{k}\right\vert}+j-1}\kappa_{j},\text{ when  }
j<2^{\left\vert \alpha _{k}\right\vert}  
\end{equation}%
from (\ref{40}) and (\ref{30}) we obtain
\begin{eqnarray} \label{31}
\left\vert IV_{2}\right\vert &=&\frac{\Phi ^{1/2p}\left( 2^{\left\vert \alpha_{k}\right\vert +1}\right) }{n}\left\vert \sum_{j=1}^{n-2^{_{\left\vert\alpha _{k}\right\vert }}}\left( D^\kappa_{j+2^{\left\vert \alpha _{k}\right\vert
}}\left( x\right)-D^\kappa_{2^{\left\vert \alpha_{k}\right\vert }}\left( x\right)\right) \right\vert   
\\ \notag
&=&\frac{\Phi ^{1/2p}\left( 2^{\left\vert \alpha _{k}\right\vert +1}\right)}{n}\left\vert \sum_{j=1}^{n-2^{\left\vert _{\alpha _{k}}\right\vert}}
\sum_{l	=2^{\left\vert \alpha _{k}\right\vert}}^{2^{\left\vert \alpha _{k}\right\vert}+j-1}\kappa_{l}
 \right\vert \\ \notag
&=&\frac{\Phi ^{1/2p}\left( 2^{\left\vert \alpha _{k}\right\vert +1}\right)}{n}\left\vert \sum_{j=1}^{n-2^{\left\vert _{\alpha _{k}}\right\vert}}\sum_{i=0}^{j-1}\kappa_{2^{\left\vert \alpha _{k}\right\vert}+i}\left( x\right) \right\vert  
\end{eqnarray}
It is obvious that every $n\in \mathbb{A}_{0,2},$ $2^{\left\vert \alpha _{k}\right\vert
}<n<2^{\left\vert \alpha _{k}\right\vert +1}$ can be expressed as $n=2^{\left\vert \alpha _{k}\right\vert}+4k+1$ and quantity of the members of final double sums are odd numbers. Indeed, quantity of the members can be calculated by the following sum:
$$
\sum_{j=1}^{n-2^{\left\vert _{\alpha _{k}}\right\vert}}j=(n-2^{\left\vert _{\alpha _{k}}\right\vert})(n-2^{\left\vert _{\alpha _{k}}\right\vert}+1)/2=(4k+1)(4k+2)/2=(4k+1)(2k+1).
$$
On the other hand, each members of the sum, which are Kaczmarz function, take values $\pm 1.$ So, it can never be $0$ and the value of such sum can not be less then 1:

$$\left\vert \sum_{j=1}^{n-2^{\left\vert _{\alpha _{k}}\right\vert}}\sum_{i=0}^{j-1}\kappa_i\left( x\right) \right\vert\geq 1, \  \text{for all }  \ x\in G.$$
It follows that
\begin{eqnarray*}
\left\vert IV_{2}\right\vert\geq \frac{\Phi ^{1/2p}\left(2^{\left\vert \alpha _{k}\right\vert+1}\right)}{2^{\left\vert \alpha _{k}\right\vert +1}}.
\end{eqnarray*}

Let $0<p<1/2$ and $n\in \mathbb{A}_{0,2},$ $2^{\left\vert \alpha _{k}\right\vert}<n<2^{\left\vert \alpha _{k}\right\vert +1}$ and $x\in I_{2}\left(
e_{0}+e_{1}\right) $. By combining (\ref{7aa}-\ref{31}) we have

\begin{eqnarray} \label{10aaa}
&&\left\Vert \sigma^\kappa_{n}F\right\Vert _{L_{p,\infty }}^{p}  \\
&\geq &\frac{c_{p}\Phi^{1/2}\left( 2^{\left\vert \alpha _k\right\vert+1}\right)}{2^{p\left(\left\vert \alpha _k\right\vert +1\right)}}\mu
\left\{ x\in I_{2}\left(e_{0}+e_{1}\right) :\text{ }\left\vert \sigma^\kappa_{n}F\right\vert \geq \frac{c_{p}\Phi^{1/2p}\left( 2^{\left\vert \alpha
_{k}\right\vert +1}\right) }{2^{\left\vert \alpha _{k}\right\vert +1}}\right\}   \notag \\
&\geq &\frac{c_{p}\Phi ^{1/2}\left( 2^{\left\vert \alpha _{k}\right\vert+1}\right) }{2^{p\left( \left\vert \alpha _{k}\right\vert +1\right) }}\mu
\left\{ I_{2}\left( e_{0}+e_{1}\right) \right\} \geq \frac{c_{p}\Phi^{1/2}\left( 2^{\left\vert \alpha _{k}\right\vert +1}\right) }{2^{p\left(
\left\vert \alpha _{k}\right\vert +1\right) }}. \notag
\end{eqnarray}

Hence, by using \eqref{12j} and \eqref{10aaa} we get that
\begin{eqnarray*}
&&\underset{n=1}{\overset{\infty }{\sum }}\frac{\left\Vert \sigma^\kappa_{n}F\right\Vert _{L_{p,\infty }}^{p}}{\Phi \left( n\right) }\geq \underset{\left\{ n\in \mathbb{A}_{0,2}:\text{ }2^{\left\vert \alpha _{k}\right\vert}<n<2^{\left\vert \alpha _{k}\right\vert +1}\right\} }{\sum }\frac{%
\left\Vert \sigma^\kappa_{n}F\right\Vert _{L_{p,\infty }}^{p}}{\Phi \left(n\right) } \\
&\geq &\frac{1}{\Phi ^{1/2}\left( 2^{\left\vert \alpha _{k}\right\vert+1}\right) }\underset{\left\{ n\in \mathbb{A}_{0,2}:\text{ }2^{\left\vert
\alpha _{k}\right\vert }<n<2^{\left\vert \alpha _{k}\right\vert +1}\right\} }{\sum }\frac{1}{2^{p\left( \left\vert \alpha _{k}\right\vert +1\right) }} \\
&\geq& \frac{c_{p}2^{\left( 1-p\right) \left( \left\vert \alpha_{k}\right\vert +1\right) }}{\Phi ^{1/2}\left( 2^{\left\vert \alpha_{k}\right\vert +1}\right) }\rightarrow \infty,\text{ \ as \  }
k\rightarrow \infty.
\end{eqnarray*}

The proof is complete.	
	
\end{proof}

\thebibliography{99}

\bibitem{AVDR}  
\textit{G.N. Agaev, N.Ya. Vilenkin, G.M. Dzhafarli, A.I. Rubinstein},
{\it Multiplicative systems of functions and harmonic analysis on 0-dimensional groups,}
{``ELM'' (Baku, USSR) (1981) (Russian).}

\bibitem{b} \textit{I. Blahota,} {\it On a norm inequality with respect to
Vilenkin-like systems,} Acta Math. Hungar. {\bf 89 (1-2)} (2000), 15--27.

\bibitem{b1}
\textit{I. Blahota, } On the maximal value of Dirichlet and Fejér kernels with respect to the Vilenkin-like space,
Publ. Math. {\bf80 (3-4),} (2000), 503-513.

\bibitem{BNPT} \textit{I. Blahota, K. Nagy, L. E. Persson, G. Tephnadze,} A sharp boundedness result concerning some maximal operators of partial sums with respect to Vilenkin systems, Georgian Math., J., {\bf26, (3)} (2019), 351-360. 

\bibitem{gat}
\textit{G. G\'at},
{\it On $(C,1)$ summability of integrable functions  with respect to the  Walsh-Kaczmarz system,}
{ Studia Math.}
{\bf 130 (2)}
{(1998), 135--148.}

\bibitem{gat1} \textit{G. Gát,} {\it Inverstigations of certain operators with respect to the Vilenkin sistem,} Acta Math. Hung., {\bf 61}  (1993), 131--149.

\bibitem{GGN-SSMH}
\textit{ G. G\'at, U. Goginava, K. Nagy},
{\it On the Marcinkiewicz-Fej\'er means of double Fourier series  with respect to the  Walsh-Kaczmarz system,}
{ Studia Sci. Math. Hung.,}
{\bf  46 (3)}
{ (2009), 399--421.}

\bibitem{Gog-PM}
\textit{ U. Goginava}, 
{\it The maximal operator of the Fej\'er means of the character system of the $p$-series
field in the Kaczmarz rearrangement,} Publ. Math. Debrecen {\bf 71 (1-2)} (2007), 43--55. 

\bibitem{GN-CZMJ}
\textit{U. Goginava, K. Nagy,} {\it  On the maximal operator of Walsh-Kaczmarz-Fej\'er means,} Czeh. Math. J. {\bf 61 (136)} (2011) 673--686.

\bibitem{G-E-S} \textit{B. Golubov, A. Efimov, V. Skvortsov,} {\it Walsh series and transformations,} Dordrecht, Boston, London, 1991. Kluwer Acad. publ., 1991.

\bibitem{NG1}
\textit{K. Nagy, G. Tephnadze,} {\it  Kaczmarz-Marcinkiewicz means and Hardy spaces,} Acta math. Hung., {\bf 149 (2)} (2016), 346--374.

\bibitem{NG2}
\textit{K. Nagy, G. Tephnadze,} {\it On the Walsh-Marcinkiewicz means on the Hardy space,} Cent. Eur. J. Math., {\bf 12, 8} (2014), 1214--1228.

\bibitem{NG3}
\textit{K. Nagy, G. Tephnadze,} {\it Strong convergence theorem for Walsh-Marcinkiewicz means,} Math. Inequal. Appl., {\bf 19, (1)} (2016), 185--195.

\bibitem{PT}\textit{L.-E. Persson, G. Tephnadze,} A sharp boundedness result concerning some maximal operators of Vilenkin-Fejér means, Mediterr. J. Math., {\bf 13 (4)} (2016) 1841-1853.

\bibitem{PTT}\textit{L.-E. Persson, G. Tephnadze, G. Tutberidze,} On the boundedness of subsequences of Vilenkin-Fejér means on the martingale Hardy spaces, operators and matrices, {\bf14 (1)} (2020), 283-294.

\bibitem{PTTW}\textit{L.-E. Persson, G. Tephnadze, G. Tutberidze, P. Wall, } Strong summability result of Vilenkin-Fejér means on bounded Vilenkin groups, Ukr. Math. J., (to appear).

\bibitem{Sch2}
\textit{F. Schipp},
{\it Pointwise convergence of expansions with respect to certain product systems,}
{ Anal. Math. }
{\bf 2}
{(1976), 65--76.}

\bibitem{SWSP}
\textit{ F. Schipp, W.R. Wade, P. Simon, J. P\'al},
{\it Walsh Series. An Introduction to Dyadic Harmonic Analysis,}
{ Adam Hilger (Bristol-New York 1990). }

\bibitem{Si} \textit{P. Simon,} Strong convergence of certain means with
respect to the Walsh-Fourier series, Acta Math. Hung., {\bf 49} (1987) 425--431.

\bibitem{S2}
\textit{P. Simon},
{\it On the Ces\`aro summability with respect to the  Walsh-Kaczmarz system,}
{J. Approx. Theory, }
{\bf 106}
{(2000) 249--261.}

\bibitem{si1} \textit{P. Simon, } {\it Strong Convergence Theorem for
Vilenkin-Fourier Series,} J. Math. Anal. Appl.,
{\bf 245} (2000), 52--68.

\bibitem{si2} \textit{P. Simon, }  {\it $(C,\alpha)$ summability of Walsh-Kaczmarz-Fourier series,} { J. Approx. Theory,} {\bf 127} (2004) 39--60.

\bibitem{si3} \textit{P. Simon,} {\it Remarks on strong convergence with respect to the Walsh
system,} East Journal on Approx. {\bf 6} (2000), 261--276.

\bibitem{Sk1}
\textit{V.A. Skvortsov},
{\it On Fourier series with respect to the Walsh-Kaczmarz system,}
{Anal. Math.,}
{\bf 7 }
{ (1981), 141--150.}

\bibitem{sm} \textit{B. Smith,} {\it  A strong convergence theorem for $%
H_{1}\left( T\right) ,$} Lecture Notes in Math., 995, Springer, Berlin,
(1994), 169--173.

\bibitem{Snei}
\textit{A.A. $\breve{\textsc{S}}$neider},
{\it On series with respect to the Walsh functions with monotone coefficients,}
{ Izv. Akad. Nauk SSSR Ser. Math.}
{\bf 12 }
{(1948), 179--192.}

\bibitem{T1}
\textit{G. Tephnadze,} {\it On the maximal operators of Walsh-Kaczmarz-Fej\'er means,} Period. Math. Hungar., 
{\bf 67 (1)}
(2013) 33--45.

\bibitem{T2}
\textit{G. Tephnadze,} {\it Approximation by Walsh-Kaczmarz-Fej\'er means on the Hardy space,} Acta  Math. Scientia,
{\bf (34B) (5)}
(2014) 1593--1602.

\bibitem{T4}
\textit{G. Tephnadze,} {\it A note on the norm convergence by Vilenkin-Fejér means,} Georgian Math. J., {\bf 21 (4)} (2014), 511--517. 

\bibitem{T3}
\textit{G. Tephnadze, }
{\it Strong convergence theorem for Walsh-Fej\'er means,}
Acta  Math. Hungar., {\bf 142 (1)} (2014) 244--259.

\bibitem{tut1} \textit{G. Tutberidze,} {\it A note on the strong convergence of partial sums with respect to Vilenkin system,} {J. Contemp. Math. Anal.,} {\bf 54,  6} (2019) 319-324.

\bibitem{Y}
\textit{W.S. Young},
{\it On the a.e converence of Walsh-Kaczmarz-Fourier series,}
{ Proc. Amer. Math. Soc.,}
{\bf 44 }
{(1974), 353--358.}

\bibitem{We1}
\textit{F. Weisz},
{\it Martingale Hardy spaces and their applications in Fourier Analysis}, Springer, Berlin-Heidelberg-New York, 1994.

\bibitem{we3}  
\textit{F. Weisz}, 
{\it Summability of
	multi-dimensional Fourier series and Hardy space, Kluwer Academic, Dordrecht, 2002.}

\bibitem{We5}  
\textit { F. Weisz},
{\it $\theta$-summability of Fourier series,}
{ Acta Math. Hungar.}
{\bf 103}
{ (2004), 139--176.}
\end{document}